\documentclass[10pt,journal]{IEEEtran}
\usepackage{epsfig,amsmath,amsfonts,cite}
\usepackage{threeparttable}
\usepackage{times}

\usepackage{color}
\usepackage{graphicx}

\usepackage{booktabs}

\usepackage{multirow}

\usepackage{bm}
\usepackage{graphicx}
\usepackage{subfigure}
\usepackage{float}
\usepackage{url}
\usepackage{color}
\usepackage{balance}
\usepackage{multirow}
\usepackage{threeparttable}
\usepackage{stmaryrd}
\usepackage{booktabs}
\usepackage[ruled,vlined]{algorithm2e}
\usepackage{threeparttable}
\usepackage{graphicx}

\DeclareMathAlphabet\mathbfcal{OMS}{cmsy}{b}{n}
\newcommand{\ten}[1]{\mathbfcal{#1}}
\newcommand{\mat}[1]{\mathbf{#1}}

\newcommand{\parm}{{\xi}}
\newcommand{\vecpar}{\boldsymbol{\parm}}
\newcommand{\veceta}{\boldsymbol{\eta}}

\newcommand{\parNum}{d}

\newcommand{\out}{y}


\newcommand{\Phimat}{\boldsymbol{\Phi}}
\newcommand{\multiGPC}{\Psi }

\newcommand{\polyInd}{\alpha}
\newcommand{\basisInd}{\boldsymbol{\polyInd}}
\newcommand{\pcOrder}{p}

\newcommand{\yPC}{\sum\limits_{|\basisInd|=0}^{\pcOrder} {c_{\basisInd}  \multiGPC_{\basisInd}  (\vecpar)} }

\newcommand{\muvec}{\boldsymbol{\mu}}
\newcommand{\Sigmamat}{\mathbf{\Sigma}}

\newcommand{\bvec}{\mathbf{b}}

\newcommand{\yvec}{\mathbf{y}}
\newcommand{\cvec}{\mathbf{c}}

\newcommand{\xvec}{\mathbf{x}}
\newcommand{\Amat}{\mathbf{A}}
\newcommand{\Bmat}{\mathbf{B}}
\newcommand{\Cmat}{\mathbf{C}}

\newcommand{\Mmat}{\mathbf{M}}
\newcommand{\Lmat}{\mathbf{L}}
\newcommand{\Gmat}{\mathbf{G} }
\newcommand{\Atensor}{\mathbfcal{A}}

\newcommand{\Btensor}{\mathbfcal{B}}

\newtheorem{lemma}{Lemma}
\newtheorem{theorem}{Theorem}

\newcommand{\reff}[1]{(\ref{#1})}

\newcommand{\ccf}[1]{\textcolor{black}{#1}}

\def\sssp{\def\baselinestretch{0.88}\large\normalsize}\sssp

\ifCLASSINFOpdf

\else
\fi

\hyphenation{op-tical net-works semi-conduc-tor}

\begin{document}

\title{High-Dimensional Uncertainty Quantification of Electronic and Photonic IC with  Non-Gaussian Correlated Process Variations}


\author{Chunfeng Cui and Zheng Zhang,~\IEEEmembership{Member,~IEEE}
\thanks{Some preliminary results of this work have been published in ICCAD 2018~\cite{cui2018uncertainty}.   This work was supported by NSF CAREER Award CCF 1846476, NSF CCF 1763699 and the UCSB start-up grant.}
\thanks{Chufeng Cui and Zheng Zhang are with the Department of Electrical and Computer Engineering, University of California, Santa Barbara, CA 93106, USA (e-mail: chunfengcui@ucsb.edu, zhengzhang@ece.ucsb.edu).}
}


\maketitle

\begin{abstract}

Uncertainty quantification based on generalized polynomial chaos has been used in many applications. It has also achieved great success in variation-aware design automation. However, almost all existing techniques assume that the parameters are mutually independent or Gaussian correlated, which is rarely true in real applications. For instance, in chip manufacturing, many process variations are actually correlated. Recently, some techniques have been developed to handle non-Gaussian correlated random parameters, but they are time-consuming for high-dimensional problems.  
We present a new framework to solve uncertainty quantification problems with many non-Gaussian correlated uncertainties. Firstly, we propose a set of smooth basis functions to well capture the impact of non-Gaussian correlated process variations. We develop a tensor approach to compute these basis functions in a high-dimension setting. Secondly, we investigate the theoretical aspect and practical implementation of a sparse solver to compute the coefficients of all basis functions. We provide some theoretical analysis for the exact recovery condition and error bound of this sparse solver in the context of uncertainty quantification. We present three adaptive sampling approaches to improve the performance of the sparse solver. Finally, we validate our methods by synthetic and practical electronic/photonic ICs with 19 to 57 non-Gaussian correlated variation parameters. Our approach outperforms Monte Carlo by thousands of times in terms of efficiency. It can also accurately predict the output density functions with multiple peaks caused by non-Gaussian correlations, which are hard to capture by existing methods.

\end{abstract}
\begin{IEEEkeywords} High dimensionality, uncertainty quantification, electronic and photonic IC, non-Gaussian correlation, process variations, tensor, sparse solver, adaptive sampling.
\end{IEEEkeywords}

\section{Introduction}

\IEEEPARstart{U}{ncertainties} are  unavoidable in almost all engineering fields. In semiconductor chip design, a major source of uncertainty is the fabrication process variations. For instance, in deeply scaled electronic integrated circuits (ICs)~\cite{variation2008} and MEMS~\cite{agarwal2009stochastic}, process variations have become a major concern in emerging design technologies such as integrated photonics~\cite{zortman2010silicon}. One of the traditional uncertainty quantification methods is Monte Carlo~\cite{MCintro}, which is easy to implement but has a low convergence rate. In recent years, various stochastic spectral methods (e.g., stochastic Galerkin \cite{ghanem1991stochastic}, stochastic testing \cite{zzhang:tcad2013} and stochastic collocation \cite{xiu2005high}) have been developed and have achieved orders-of-magnitude speedup compared with Monte Carlo in vast applications, including (but not limited to) the modeling and simulation of VLSI interconnects \cite{Tarek_DAC:08, Wang:2004,Shen2010,cmpt2012,chen2014optimal,pham2014decoupled}, nonlinear
ICs~\cite{Strunz:2008, Tao:2007, zzhang:tcad2013, spina2012variability,manfredi2014stochastic,Rufuie2014,ahadi2016sparse}, MEMS~\cite{zzhang_cicc2014,agarwal2009stochastic},  photonic circuits \cite{twweng:optsEx, waqas2018polynomial,melati2015statistical}, and computer architecture \cite{he2019iccad}.

The key idea of stochastic spectral method is to represent the stochastic solution as the linear combination of basis functions. It can obtain highly accurate solutions at a low computational cost when the parameter dimensionality is not high (e.g., less than 20).
Despite their great success, stochastic spectral methods are limited by a long-standing challenge: the generalized polynomial chaos basis functions assume that all random parameters are mutually independent~\cite{gPC2002}. This assumption fails in many realistic cases. For instance, a lot of device-level geometric or electrical parameters are highly correlated because they are influenced by the same fabrication steps; Many circuit-level performance parameters used in system-level analysis depend on each other due to the network coupling and feedback.

Data-processing techniques such as principal or independent component analysis~\cite{wold1987principal,singh2006statistical} can handle Gaussian correlations, but they cause huge errors in general {\it non-Gaussian correlated} cases. Soize and Ghanem  \cite{soize2004physical} proposed to modify the basis functions to a non-smooth chaos formulation, which was applied to the uncertainty analysis of silicon photonics~\cite{twweng:optsEx}. It was found that the method in~\cite{soize2004physical} does not converge well, and designers cannot easily extract mean value and variance from the solution. Recently, we proposed a novel approach to handle non-Gaussian correlated process variations in  \cite{cui2018stochastic_epeps, cui2018stochastic}. We constructed the basis functions via a Gram-Schmidt formula, and then built the surrogate model via an optimization-based stochastic collocation approach.
\ccf{Our} basis functions inherit three important properties of the independent counterparts:  smoothness, orthonormality, and the capability of providing closed-form  mean value and variance of a stochastic solution. In~\cite{cui2018stochastic}, some theoretical results about the numerical error and complexity were provided, and thousands of times of speedup than Monte Carlo were achieved in electronic and photonic ICs with a few non-Gaussian correlated random parameters. \ccf{A later paper \cite{jakeman2019polynomial} presented a similar method to solve the same kind of problems.} However, how to handle high-dimensional non-Gaussian correlated uncertain parameters remains an open question, despite significant progress in high-dimensional uncertainty quantification with independent random parameters~\cite{xli2010, hampton2015compressive,ma2010adaptive,zzhang_cicc2014,el2010variation,zhang2014calculation,zhang2015enabling,zhang2017big,zhang2017tensor}.

{\bf  Contributions.} This paper presents a framework to quantify the uncertainties of electronic and photonic ICs with {\it high-dimensional and non-Gaussian correlated} process variations. Our approach has two excellent features: it efficiently computes high-dimensional basis functions that well capture the impact of non-Gaussian correlated uncertainties; it can also automatically choose informative parameter samples to reduce the numerical simulation cost in a high-dimension setting. The specific contributions of this paper include:
 
\begin{itemize}
\item We derive a class of basis functions for non-Gaussian  correlated random parameters. Our method is based on the {Cholesky factorization} and can overcome the theoretical limitations of~\cite{soize2004physical}. For high-dimensional problems \ccf{that cannot be handled by~\cite{cui2018stochastic_epeps, cui2018stochastic}}, we construct the basis functions via a functional tensor train method when the random parameters are equipped with a Gaussian  mixture density function. We also present a theoretical analysis about the expressive power of our basis functions;
 
\item In order to apply our method to high-dimensional problems, we investigate the theoretical aspect and implementation of sparse solver with $\ell_0$-minimization. Our contributions are twofold. Firstly, we provide the theoretical justification for this $\ell_0$-minimization and an error bound for the resulting surrogate model in the context of uncertainty quantification. Secondly, we improve its performance by adaptive sampling. Instead of using random simulation samples (as done in~\cite{xli2010}), we select the most informative samples via a rank-revealing QR factorization and adaptive optimal sampling criteria including the D-optimal, R-optimal, and E-optimal.
 
\end{itemize}
Compared with our conference paper~\cite{cui2018uncertainty}, this extended journal manuscript presents the following additional results:
\begin{itemize}
    \item  We prove that our basis functions are complete in the polynomial subspace, and that our expression is able to approximate any square-integrable function;
    \item  We show the theoretical conditions to obtain an accurate sparse stochastic approximation and the error  bounds of the resulting sparse stochastic surrogate model;
    \item We proposed two additional approaches, i.e., R-optimal and E-optimal, to select informative samples;
    \item We add more  examples to verify the theoretical properties and performance of our proposed approach.
\end{itemize}

  \begin{figure}[t]
    \centering
        \includegraphics[width=3.6in, height=1.5in]{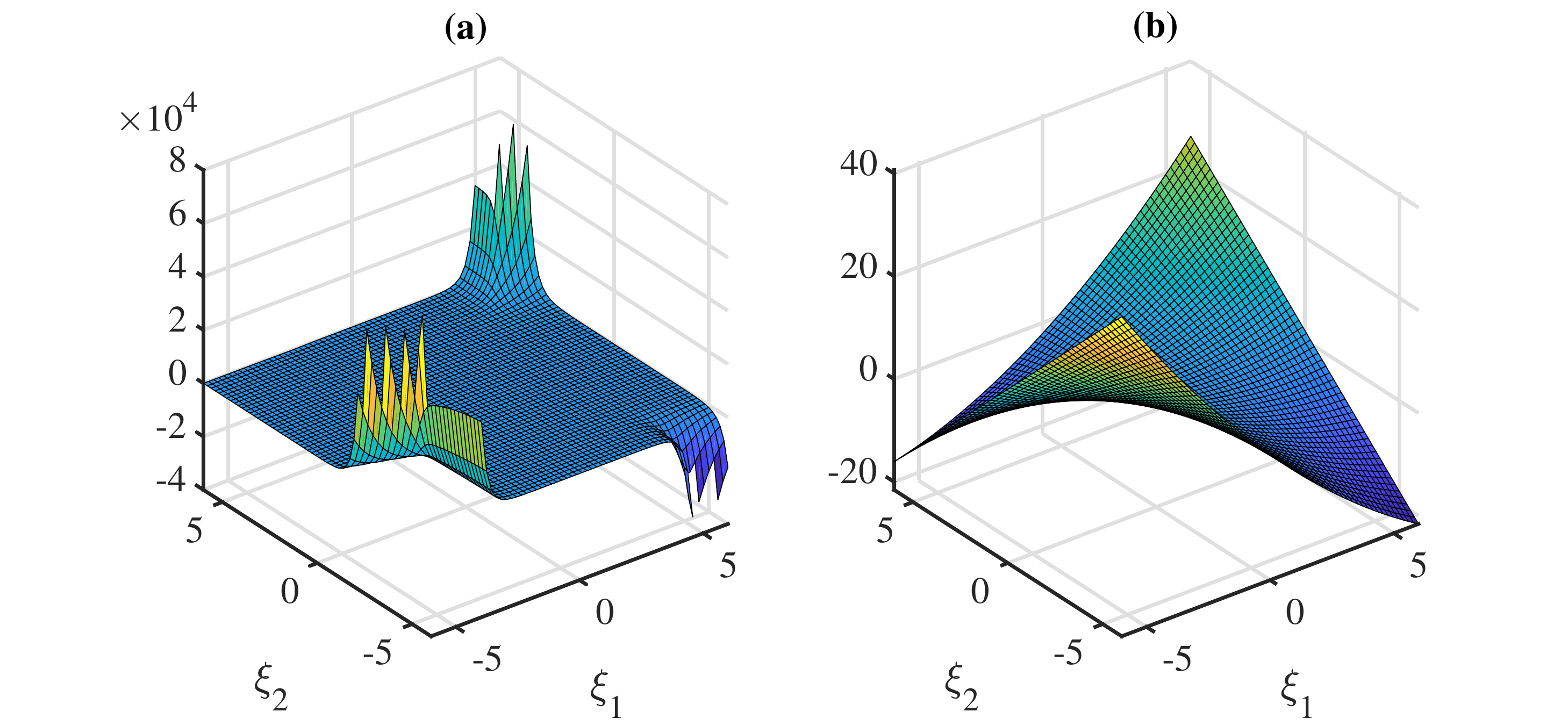}
\caption{ (a): A two-variable basis function by~\cite{soize2004physical}; (b): a basis function obtained by our proposed method.}
    \label{fig:basisfun}
\end{figure}


\section{Preliminary}
\label{sec:preliminaries}

\subsection{Generalized Polynomial Chaos}
\label{subsec:uq}
Let $\vecpar=[\parm_1, \ldots, \parm_{\parNum}] \in \mathbb{R}^{\parNum}$ be $d$ random parameters describing the  process variations. Assume that $\vecpar$ has a joint probability density function $\rho(\vecpar)$, and let $\out (\vecpar) \in \mathbb{R}$ be an uncertain performance metric (e.g., the bandwidth or frequency of a circuit). Suppose $\out (\vecpar)$ is smooth and has a bounded variance. Stochastic spectral method approximates $\out(\vecpar)$ via a truncated generalized polynomial chaos expansion~\cite{gPC2002}:
\begin{equation}
\label{eq:ygpc}
\out (\vecpar) \approx \yPC.
\end{equation}
Here, $c_{\basisInd}$ is the coefficient  and ${\multiGPC}_{\basisInd} \left(\vecpar\right)$ is an orthonormal polynomial satisfying
\begin{equation}
 \mathbb{E}\left[{\multiGPC}_{\basisInd} \left( \vecpar \right) \multiGPC_{\boldsymbol{\beta }}\left( \vecpar \right)\right ]=
\left\{\begin{array}{cc}
1,&{\rm\  if\ } \basisInd=\boldsymbol{\beta };\\
0,&\rm{otherwise.}
\end{array}\right.
\end{equation}
The operator $\mathbb{E}$ denotes expectation;
$\basisInd=[\alpha_1,\ldots, \alpha_{\parNum}] \in \mathbb{N}^{\parNum}$ is a vector  with   $\alpha_i$ being the highest polynomial order in terms of $\xi_i$.
The total polynomial order $|\basisInd|=|\alpha_1|+\ldots +|\alpha_d|$ is bounded by $p$, and thus there are $n=(p+d)!/(p!d!)$  basis functions in total.

The generalized polynomial chaos theory~\cite{gPC2002} assumes that all random parameters are mutually independent. In other words, if $\rho _k(\xi _k)$ denotes the marginal density of $\xi_k$,  the joint density is $\rho(\vecpar) = \prod\limits_{k=1}^d {\rho _k(\xi _k)}$. Under this assumption, a multivariate basis function has a product form:
\begin{equation}
\label{eq:gPC_basis}
\multiGPC_{\basisInd}(\vecpar) =\prod\limits_{k=1}^d {\phi_{k,\alpha_k} ( {\xi_k } )}.
\end{equation}
Here ${\phi_{k,\alpha_k} ( {\xi_k } )}$ is a univariate degree-$\alpha_k$ orthonormal polynomial of parameter $\xi_k$, and it is calculated based on $\rho _k(\xi _k)$ via the three-term recurrence relation~\cite{Walter:1982}.
The unknown coefficients $c_{\basisInd}$ can be computed via various solvers such as stochastic Galerkin \cite{ghanem1991stochastic}, stochastic testing~\cite{zzhang:tcad2013}, and  stochastic collocation~\cite{xiu2005high}. Once $c_{\basisInd}$ are computed, the mean value, variance and density function of $\out (\vecpar)$ can be easily obtained.

However, if the domain is not exactly a tensor product or the parameters are not independent, the above theory cannot be applied directly. This is now an active research topic in both theoretical and application domains.


\subsection{Existing Solutions for Correlated Cases}
  For general non-Gaussian correlated parameters, the reference \cite{soize2004physical} suggested the following basis functions:
\begin{equation}
\label{eq:apc}
\multiGPC_{\basisInd}(\vecpar) =\left(
\prod\limits_{k=1}^d {\rho _k(\xi _k)}/\rho(\vecpar)
\right)^{\frac12}\prod\limits_{k=1}^d {\phi_{k,\alpha_k} ( {\xi_k } )}.
\end{equation}
However, the above basis functions  have two limitations as shown by the numerical results in~\cite{twweng:optsEx}:
\begin{itemize}
\item The basis functions are highly non-smooth and numerically unstable due to the first term  on the right-hand side of \eqref{eq:apc}. This is demonstrated in Fig.~\ref{fig:basisfun}~(a).

\item The basis functions  do not allow an explicit expression for the expectation and variance of $\out (\vecpar)$. This is because the basis function indexed by ${\basisInd}=0$ is not a constant.
\end{itemize}

Recently, we proposed to build a new set of basis functions   via a Gram-Schmidt approach~\cite{cui2018stochastic, cui2018stochastic_epeps}.
We also suggested to compute the coefficients via a stochastic collocation approach $c_{\basisInd}=\mathbb{E}[\multiGPC_{\basisInd}(\vecpar) y(\vecpar)]\approx  \sum_{k} \multiGPC_{\basisInd}(\vecpar_k) y(\vecpar_k)w_k$. A quadrature rule based on an optimization model was developed to compute the quadrature points $\vecpar_k$ and weights $w_k$, and the number of quadrature points was determined  automatically. Our recent technique~\cite{cui2018stochastic, cui2018stochastic_epeps} is highly accurate and efficient for low-dimensional problems, but it suffers from the curse of dimensionality: a huge number of simulation samples will be required if the number of random parameters is large.


\subsection{Background: Tensor Train Decomposition}
\label{subsec:tensortrain}
A  tensor $\ten{A} \in \mathbb{R}^{n_1\times n_2 \times \cdots n_d}$ is a $d$-way data array, which is a high-dimensional generalization of a vector and a matrix. A tensor has  $n_1n_2\ldots n_d$ elements, leading to a prohibitive computation and storage cost. Fortunately, this challenge may be addressed by tensor decomposition~\cite{tensor:suvey}. Among various tensor decomposition approaches, tensor train decomposition~\cite{oseledets2011tensor} is   highly suitable for factorizing high-dimensional tensors. Specifically, given a $d$-way tensor $\Atensor$, the tensor-train decomposition admits a decomposition as
\begin{equation}
a_{i_1 i_2\cdots i_d}=\Amat_1(i_1)\Amat_2(i_{{2}})\ldots \Amat_d(i_d), \forall\, i_k=1,2,\cdots, n_k,
\end{equation}
where $\Amat_k(i_k)$ is an $r_{k-1}\times r_k$ matrix, and
$r_0=r_d=1$.

Given two $d$-way tensors $\Atensor$, $\Btensor$ and their corresponding  tensor train decomposition factors, the tensor train decomposition of
their Hadamard (element-wise) product $\mathcal{C}=\Atensor\circ \Btensor$ has a closed form
\begin{equation}
\label{equ:TTkron}
\Cmat_k(i_k) = \Amat_k(i_k)\otimes \Bmat_k(i_k).
\end{equation}
Here $\otimes$ denotes a matrix Kronecker product.


\section{Basis Functions With Non-Gaussian Correlated Uncertainties}
\label{sec:nongaussian}

Assume that the elements of $\vecpar$ are non-Gaussian correlated. A broad class of non-Gaussian correlated parameters can be described {by} fitting a Gaussian mixture model based on device/circuit testing data. In this general setting, the basis functions in~\eqref{eq:gPC_basis} cannot be employed. Therefore, we will derive a set of multivariate polynomial basis functions, which can be obtained if a multivariate moment computation framework is available. We also show the theoretical completeness and expressive power of our basis functions.


 \subsection{Multivariate Basis Functions}
Several orthogonal polynomials exist for   specific density functions~\cite{ismail2017review}. In general, one may construct   multivariate orthogonal
polynomials via the three-term recurrence
in~\cite{xu1993multivariate} or~\cite{barrio2010three}.
However, their theories   either are hard to implement or can only guarantee weak orthogonality ~\cite{xu1993multivariate, barrio2010three}.
Inspired by  \cite{Golub:1969}, we present a simple yet efficient method for computing  a set of multivariate orthonormal polynomial basis functions.

Let $\vecpar^{\basisInd}=\xi_1^{\alpha_1}\xi_2^{\alpha_2}\ldots\xi_d^{\alpha_d}$ be a \textit{monomial} indexed by $\basisInd$ and the corresponding \textit{moment} be
 \begin{equation}
 \label{equ:moments}
  \mathbb{E}[\vecpar^{\basisInd}]:=\int \vecpar^{\basisInd} \rho(\vecpar) d\vecpar.
 \end{equation}
We resort all monomials bounded by order $p$ in the graded lexicographic order, and denote them as
\begin{equation}\label{equ:monomials}
\bvec(\vecpar)=[b_1(\vecpar),\ldots,b_{n}(\vecpar)]^T\in\mathbb{R}^n.
\end{equation}
Here, $n=\binom{d+p}{d}$.
Further, we denote the  \textit{multivariate moment matrix}  as $\Mmat\in\mathbb{R}^{n\times n}$, where
 \begin{equation}
 \label{equ:mommat}
\mat{M}=\mathbb{E} \left[ \mat{b}(\vecpar) \mat{b}^T(\vecpar)\right], \text{ with } m_{ij}=\mathbb{E}[b_i(\vecpar)b_j(\vecpar)].
 \end{equation}
 {Here, $\mat M$ is also the Gram matrix of $\mat b(\vecpar)$.  Because the monomials $\mat b(\vecpar)$ are linearly independent, it hold that  $\mat M$ is positive definite  according to \cite{horn2012matrix}.}
 For instance, if $d=2$ and $p=1$,   the monomials and the multivariate moment matrix are
 \begin{equation*}
 \mat{b}(\vecpar)=[1, \xi_1,\xi_2]^T \text{ and }\mat{M}=\left[\begin{array}{ccc}
     1 &  \mathbb{E}[\xi_1] &  \mathbb{E}[\xi_2]\\
     \mathbb{E}[\xi_1] &  \mathbb{E}[\xi_1^2] &  \mathbb{E}[\xi_1\xi_2]\\
      \mathbb{E}[\xi_2] &  \mathbb{E}[\xi_1\xi_2] &  \mathbb{E}[\xi_2^2]\\
 \end{array}\right].
 \end{equation*}

We intend to construct $n$ multivariate orthonormal polynomials $\multiGPC_{\basisInd}(\vecpar)$ with their total degrees $|\basisInd|$ bounded by $p$.
{The monomials} $\mat b(\vecpar)$   already contain $n$ different polynomials, even though they are not  orthogonal. The key idea of our method is to  orthogonalize $\mat{b}(\vecpar)$ via a linear projection. This is fulfilled in three steps.
Firstly, we compute all elements in the multivariate moment matrix $\mat{M}$. {This step involves computing the moments up to order $2p$ because $\mat b(\vecpar)$ is up to order $p$.}
Secondly, we decompose $\mat M$ via  the Cholesky factorization $\Mmat=\Lmat\Lmat^T$, where $\mat{L}$ is a   lower-triangular matrix.
{The process is stable when the diagonal elements  $\mathbb{E}[\mat b_i^2(\vecpar)]$ are strictly positive. If $\mat M$ is close to singular, the Cholesky process maybe unstable. In such situations, we add a small permutation term by $\mat M=\mat M + \varepsilon \mat I$.}
Thirdly, we define the orthogonal basis functions as
 \begin{equation}
 \label{equ:basis}
  \boldsymbol{\Psi}(\vecpar) =\Lmat^{-1}\mathbf{b}(\vecpar).
 \end{equation}
Here the $n$-by-1 functional vector $\boldsymbol{\Psi}(\vecpar)$ stores all basis functions $\{\multiGPC_{\basisInd}(\vecpar)\}$ in the graded lexicographic order.  \ccf{This factorization approach can reduce the high-dimensional functional inner-product and moment computations in~\cite{cui2018stochastic_epeps, cui2018stochastic}.}

{\it Properties of the Basis Functions.} Our proposed basis functions have the following excellent properties:
\begin{enumerate}
\item[1)] The basis functions are smooth (c.f. Fig.~\ref{fig:basisfun} (b)). In fact, the basis function $\multiGPC_{\basisInd}(\vecpar)$  is a multivariate polynomial.
It differs from the standard generalized polynomial chaos \cite{gPC2002} in the sense that our basis functions are not the products of univariate polynomials.
\item[2)] All basis functions are orthonormal to each other:
\begin{equation}
 \nonumber
  \mathbb{E}\left[\boldsymbol{\Psi}(\vecpar) \boldsymbol{\Psi}^T(\vecpar)\right]=\Lmat^{-1}\mat{M}\mat{L}^{-T}=\mat{I}.
 \end{equation}
This property is important for the sparse approximation and for extracting the statistical information of $\out(\vecpar)$.
\item[3)]  Since $\multiGPC_{\mat 0}(\vecpar)=1$ is a constant,    the expectation and variance of $\out(\vecpar)$ has a closed-form formula in \eqref{equ:mean}.
\begin{align}
\nonumber \mathbb{E}[\out (\vecpar)] \approx & \sum_{|\basisInd|=0}^p c_{\basisInd} \mathbb{E}\left[\multiGPC_{\basisInd}(\vecpar)\right]=c_{\mat{0}},\\
\text{var}[\out (\vecpar)] = &\mathbb{E} \left[\out^2 (\vecpar) \right]- \mathbb{E}^2 \left[\out (\vecpar) \right] \approx \sum_{|\basisInd|=1}^p c_{\basisInd}^2.
\label{equ:mean}
\end{align}

 \begin{figure*}[t]
    \centering
        \includegraphics[width=5.2in, height=1.5in]{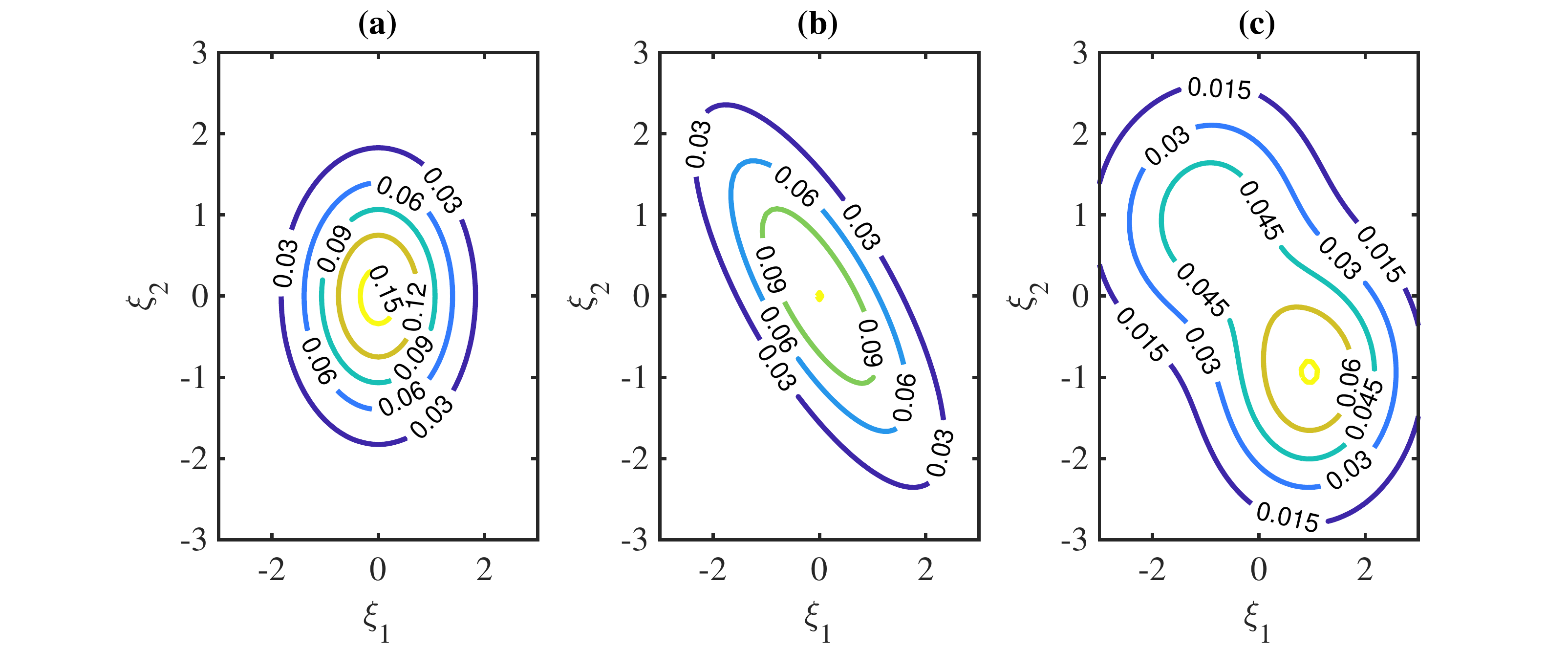}
\caption{Joint density functions for (a): independent Gaussian, (b): correlated Gaussian, (c): correlated non-Gaussian (e.g., a Gaussian  mixture distribution).}
    \label{fig:gmdistribution}
\end{figure*}

\end{enumerate}

\subsection{Theoretical Analysis}

Now we consider the expressive power of our basis functions. This can be described via
\textit{completeness:} a family of basis functions is complete in a space if any function in this space can be uniquely expressed as a linear combination of these basis functions. Denote $\ten{S}_p$ as the space including all polynomials bounded by order $p$. We have the following result.

\begin{lemma}[Completeness]\label{lem:complete}
The  basis functions defined via \eqref{equ:basis} are complete in the space $\ten{S}_p$.
\end{lemma}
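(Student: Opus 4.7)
The plan is to reduce the completeness claim to a change-of-basis argument in the finite-dimensional space $\ten{S}_p$. The monomials $\mat{b}(\vecpar)=[b_1(\vecpar),\ldots,b_n(\vecpar)]^T$ already form a basis of $\ten{S}_p$ with $n=\binom{d+p}{d}$, so what needs to be verified is that $\boldsymbol{\Psi}(\vecpar)=\Lmat^{-1}\mat{b}(\vecpar)$ is obtained from $\mat{b}(\vecpar)$ by an invertible linear transformation. Once that is done, completeness in $\ten{S}_p$ follows immediately.

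The first step is to note that $\Lmat$ is invertible. The paper has already recorded that $\Mmat$ is positive definite because the monomials $\mat{b}(\vecpar)$ are linearly independent, and the Cholesky factor $\Lmat$ of a positive definite matrix is a lower-triangular matrix with strictly positive diagonal entries, hence nonsingular. In particular $\Lmat^{-1}$ exists and is also lower triangular.

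The second step is to establish that each $\multiGPC_{\basisInd}(\vecpar)$ lies in $\ten{S}_p$ and that the family $\{\multiGPC_{\basisInd}\}$ spans $\ten{S}_p$. By construction, $\boldsymbol{\Psi}(\vecpar)=\Lmat^{-1}\mat{b}(\vecpar)$ is a vector whose entries are linear combinations of monomials of degree $\le p$, so $\multiGPC_{\basisInd}\in\ten{S}_p$. Conversely, for any $f\in\ten{S}_p$ write $f(\vecpar)=\boldsymbol{\alpha}^T\mat{b}(\vecpar)$ for some $\boldsymbol{\alpha}\in\mathbb{R}^n$; using $\mat{b}(\vecpar)=\Lmat\boldsymbol{\Psi}(\vecpar)$ gives $f(\vecpar)=(\Lmat^T\boldsymbol{\alpha})^T\boldsymbol{\Psi}(\vecpar)$, which exhibits $f$ as a linear combination of the $\multiGPC_{\basisInd}$.

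The third step is uniqueness, i.e., linear independence of $\{\multiGPC_{\basisInd}\}$. If $\boldsymbol{\beta}^T\boldsymbol{\Psi}(\vecpar)\equiv 0$, then $\boldsymbol{\beta}^T\Lmat^{-1}\mat{b}(\vecpar)\equiv 0$, and since the monomials $\mat{b}(\vecpar)$ are linearly independent we obtain $\boldsymbol{\beta}^T\Lmat^{-1}=\mat{0}$, and the invertibility of $\Lmat^{-1}$ forces $\boldsymbol{\beta}=\mat{0}$. Combined with the spanning property, this yields uniqueness of the expansion and therefore completeness of $\{\multiGPC_{\basisInd}\}$ in $\ten{S}_p$. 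No step presents a serious obstacle; the only point that requires care is ensuring $\Lmat$ is well-defined, which the excerpt has already addressed by noting that positive definiteness of $\Mmat$ guarantees a valid Cholesky factorization (and, in the near-singular case, by the small regularization $\Mmat+\varepsilon\mat{I}$).
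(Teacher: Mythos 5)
Your proof is correct and follows essentially the same route as the paper's: completeness is reduced to the fact that $\boldsymbol{\Psi}(\vecpar)=\Lmat^{-1}\mat{b}(\vecpar)$ is an invertible linear change of basis from the monomials, which are already complete in $\ten{S}_p$. You are slightly more thorough than the paper in explicitly verifying invertibility of $\Lmat$ (via positive definiteness of $\Mmat$) and the linear independence needed for uniqueness of the expansion, both of which the paper leaves implicit.
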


\begin{proof}
See Appendix \ref{app:complete}.
\end{proof}

Denote $L^2(\vecpar,\rho(\vecpar))=\{y(\vecpar): \mathbb{E}_{\vecpar}[y^2(\vecpar)]<\infty\}$ as the space of square-integrable functions.
In the following lemma, we  show that our basis function can approximate  any function  in $L^2(\vecpar,\rho(\vecpar){)}$ when the polynomial order $p$ is large enough.

\begin{lemma}[Expressive power]\label{lem:convergence}
Assume that  $\vecpar$ is defined on a compact bounded domain   or there exists a constant $a>0$ such that $\mathbb E[\exp{(a|\xi_i|)}]<\infty$ for $i=1,\ldots,d$, then
\begin{itemize}
\item[(i)] the multivariate polynomials are dense in $ L^2(\vecpar,\rho(\vecpar))$;
\item[(ii)] for any $y(\vecpar)\in L^2(\vecpar,\rho(\vecpar))$, there exists an order-$p$ approximation  $y_p(\vecpar)=\sum_{|\basisInd|=0}^pc_{\basisInd}\multiGPC_{\basisInd}(\vecpar)$, such that $\mathbb{E}[(y(\vecpar)-y_p(\vecpar))^2]\rightarrow 0$ as $p\rightarrow \infty$.
\end{itemize}
\end{lemma}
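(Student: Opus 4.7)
\smallskip
\noindent\textbf{Proof plan.} The plan is to reduce Lemma \ref{lem:convergence} to two facts: (a) the density of multivariate polynomials in $L^2(\vecpar,\rho(\vecpar))$ under either of the two stated tail assumptions, and (b) the completeness result in Lemma \ref{lem:complete}. Part (i) is the classical question of when the (multivariate) moment problem is determinate, and part (ii) is then a short functional-analytic consequence of part (i) combined with the orthonormality of the $\{\multiGPC_{\basisInd}\}$. I would therefore prove (i) first and obtain (ii) as a corollary.

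For part (i), I would split into the two stated cases. On a compact domain $K\subset\mathbb{R}^d$, multivariate polynomials are dense in $C(K)$ by the Stone--Weierstrass theorem (they form a unital subalgebra separating points), and $C(K)$ is dense in $L^2(K,\rho(\vecpar))$ because $\rho(\vecpar)$ is a finite Borel measure on $K$; composing the two density statements yields the conclusion. For the unbounded case with $\mathbb{E}[\exp(a|\xi_i|)]<\infty$ for each $i$, I would invoke the classical criterion for the multivariate Hamburger moment problem to be determinate: the exponential-moment condition implies that all moments $\mathbb{E}[\vecpar^{\basisInd}]$ exist and that the characteristic function of $\vecpar$ is analytic in a neighborhood of the origin, so $\rho$ is the unique measure with those moments, and a standard argument (see, e.g., the Petersen/Berg line on determinate measures) shows that polynomials are dense in $L^2(\vecpar,\rho(\vecpar))$. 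I would state this as a lemma from the literature rather than reprove it.

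For part (ii), I would use the fact that, by Lemma \ref{lem:complete}, $\{\multiGPC_{\basisInd}\}_{|\basisInd|\le p}$ is an orthonormal basis of $\ten{S}_p$. Define
\begin{equation}
y_p(\vecpar) = \sum_{|\basisInd|=0}^{p} c_{\basisInd}\,\multiGPC_{\basisInd}(\vecpar),\qquad c_{\basisInd} = \mathbb{E}[\,y(\vecpar)\,\multiGPC_{\basisInd}(\vecpar)\,].
\end{equation}
Then $y_p$ is the orthogonal projection of $y$ onto $\ten{S}_p$, so for every $q\in\ten{S}_p$ one has $\mathbb{E}[(y-y_p)^2]\le \mathbb{E}[(y-q)^2]$. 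Given $\varepsilon>0$, part (i) furnishes some polynomial $q_\varepsilon$ of some finite total degree $p_\varepsilon$ with $\mathbb{E}[(y-q_\varepsilon)^2]<\varepsilon$; since $\ten{S}_p\supseteq\ten{S}_{p_\varepsilon}$ for $p\ge p_\varepsilon$, the best-approximation property yields $\mathbb{E}[(y-y_p)^2]<\varepsilon$ for all such $p$, which is exactly the claimed convergence.

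The main obstacle I anticipate is the moment-determinacy step in the unbounded case: the exponential integrability condition must be translated into density of polynomials in $L^2$, and this requires a multivariate version of the Riesz/Berg criterion rather than the more commonly cited univariate Carleman condition. I would handle this either by reducing to the univariate case coordinate-wise (using that the analyticity of each marginal characteristic function, together with tensor-product-type arguments on test functions $e^{i\langle t,\vecpar\rangle}$, forces $\rho$ to be determinate) or by directly citing a known multivariate determinacy result; the rest of the argument is routine orthogonal-projection bookkeeping.
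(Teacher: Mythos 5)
Your treatment of the compact case (Stone--Weierstrass on $K$ followed by density of $C(K)$ in $L^2(K,\rho)$ for a finite Borel measure) is correct, and your part (ii) --- orthogonal projection of $y$ onto $\ten{S}_p$ plus the best-approximation property and nestedness of the spaces $\ten{S}_p$ --- is exactly the reduction the paper makes, just written out more explicitly. The genuine problem is in the unbounded case. You propose the chain ``exponential moments $\Rightarrow$ the multivariate moment problem for $\rho$ is determinate $\Rightarrow$ polynomials are dense in $L^2(\vecpar,\rho(\vecpar))$.'' The second implication is a theorem of M.~Riesz in one variable, but it is \emph{false} for $d\ge 2$: Berg and Thill exhibited determinate measures on $\mathbb{R}^d$, $d\ge 2$, for which the polynomials are not dense in $L^2$. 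Your proposed fallback (establishing determinacy of $\rho$ coordinate-wise via analyticity of the marginal characteristic functions) therefore aims at the wrong target; determinacy of the joint measure is simply not strong enough to conclude (i) in several variables.

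The paper closes this gap by invoking Theorem 3 of Petersen, which is the correct coordinate-wise reduction: if for some $q>2$ the univariate polynomials are dense in $L^q(\xi_i,\rho_i(\xi_i))$ for every marginal $\rho_i$, then the multivariate polynomials are dense in $L^2(\vecpar,\rho(\vecpar))$. The hypothesis $\mathbb{E}[\exp(a|\xi_i|)]<\infty$ is used precisely to secure this stronger marginal property (density in $L^q$ for finite $q$, not merely marginal determinacy); the compact case is likewise routed through the marginals via Weierstrass. Your instinct to reduce to one-dimensional statements about the marginals is the right one --- you just need the one-dimensional statement to be ``$L^q$-density for some $q>2$'' rather than ``determinacy.'' With that substitution your argument becomes essentially the paper's proof.
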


\begin{proof}
 The detailed proof is given in Appendix \ref{app:convergence}.
\end{proof}

 \section{Higher-Order Moment Computation}
\label{subsec:moments}

We further calculate the $2p$-order moments in order to build the basis functions in a high-dimensional setting.

\subsection{Gaussian Mixture Model}
An excellent choice for the data-driven modeling of $\rho(\vecpar)$ is the Gaussian  mixture model:
 \begin{equation}
 \rho(\vecpar) =\sum_{i=1}^r w_i \mathcal{N}(\vecpar | \muvec_i, \Sigmamat_i), \; {\rm with}\; w_i>0 , \; \sum_{i=1}^r w_i=1.
 \end{equation}
$\mathcal{N}(\vecpar | \muvec_i, \Sigmamat_i)$ denotes the multivariate Gaussian density function with mean $\muvec_i \in\mathbb{R}^d$ and a positive definite covariance matrix $\Sigmamat_i \in \mathbb{R}^{d \times d}$. Fig.~\ref{fig:gmdistribution} has shown the difference of a Gaussian mixture model (e.g., $r>1$) with independent Gaussian (e.g., $r=1$ and $\Sigmamat$ is diagonal) and correlated Gaussian distributions (e.g., $r=1$ and $\Sigmamat$ is not diagonal). The corresponding moment is
\begin{equation}
\label{equ:highmoment}
m_{\basisInd}=\sum\limits_{i=1}^r w_i q_{\basisInd,i}, \; {\rm with}\; q_{\basisInd,i}= \int  \vecpar^{\basisInd} \mathcal{N}(\vecpar | \muvec_i, \Sigmamat_i) d\vecpar. \nonumber
\end{equation}
Existing methods for calculating the higher-order moments for normal distributions rely on the characteristic function \cite{tracy1993higher,phillips2010r}. The main bottleneck of these methods is enumerating an integers matrix.
In this paper, we propose a functional tensor train approach to compute the higher-order moments.

For simplicity, we ignore the subscript index $i$ in $\muvec_i$, $\Sigmamat_i$ and $q_{\basisInd,i}$. Denote $\Amat$ as the lower triangular matrix from the Cholesky decomposition of  $\Sigmamat=\Amat\Amat^T$.  Then $\veceta$ from  $\vecpar=\Amat\veceta+\muvec$  satisfies   $\boldsymbol{\eta}\sim\mathcal{N}(\boldsymbol{\eta}|\mat{0},\mat{I})$. Consequently, $q_{\basisInd}$ can be calculated via
\begin{align}
\label{equ:xi2eta}
\nonumber     q_{\basisInd} =& \int  \vecpar^{\basisInd} \mathcal{N}(\vecpar | \muvec, \Sigmamat)   d\vecpar\\
=&\int  (\Amat\veceta+\muvec)^{\basisInd} \frac{\exp(-\veceta^T\veceta)}{\sqrt{(2\pi)^d}} d\veceta.
\end{align}
The difficulty in computing \reff{equ:xi2eta} lies in  $(\Amat\veceta +\muvec)^{\basisInd}$, which is not the product of univariate functions of each $\eta_i$.

 \subsection{Functional Tensor Train Formula}
Fortunately,  $q_{\basisInd}$ can be computed exactly with an efficient functional tensor-train method.
Specifically, we are seeking for  $\Gmat_0 \in \mathbb{R}^{1\times r_0}$ and a set of univariate functional matrices $ \Gmat_i (\eta_i) \in \mathbb{R}^{r_{i-1} \times r_i}$ for $i=1,\cdots d$   with $r_{d}=1$, such that
\begin{equation}
\label{equ:de-correFun}
(\Amat\veceta +\muvec)^{\basisInd} =\Gmat_0 \Gmat_1(\eta_1)\Gmat_2(\eta_2)\ldots \Gmat_d(\eta_d).
\end{equation}
Afterwards, we can obtain $q_{\basisInd}$ via
\begin{equation}\label{equ:moment}
q_{\basisInd} = \Gmat_0 \mathbb{E}[\Gmat_1(\eta_1)]\mathbb{E}[\Gmat_2(\eta_2)]\ldots \mathbb{E}[\Gmat_d(\eta_d)].
\end{equation}
The detailed derivations of $\mat{G}_i(\eta_i)$ are as follow.

 \subsubsection{Derivation of (14)}

The $j$-th element in    $\vecpar=\Amat\veceta+\muvec$ satisfies
\begin{equation}
\label{equ:xij}
\xi_j=a_{j1}\eta_1+ a_{j2}\eta_2+\ldots+a_{jd}\eta_d+\mu_j, \forall\, j=1,\ldots,d.
\end{equation}
Here $a_{jk}$ denotes the $(j,k)$-th element of $\mat{A}$.

\begin{theorem}[Theorem 2, \cite{oseledets2013constructive}]
Any function satisfies
$$f(x_0,\ldots,x_d)   =\omega_0(x_0)+\ldots+\omega_d(x_d),$$
can be written as a functional tensor train as in Eq.~\eqref{eq:ftt_uni}, which equals to the product of some univariate matrices and vectors.
\label{thm:ftt}
\end{theorem}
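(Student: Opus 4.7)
My plan is to prove Theorem \ref{thm:ftt} by explicit construction of the univariate factor matrices $\Gmat_i(x_i)$, and then verify the factorization by a short induction on $d$. The intuition is that adding univariate terms is precisely the operation that a width-$2$ tensor train can perform, since the sum $\omega_0(x_0)+\cdots+\omega_k(x_k)$ at intermediate stages can be stored as the first entry of a two-dimensional ``accumulator,'' while the second entry carries a constant $1$ used to absorb the next summand.

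Concretely, first I would define the boundary factors
\[
\Gmat_0(x_0)=\bigl[\omega_0(x_0),\;1\bigr]\in\mathbb{R}^{1\times 2},\qquad
\Gmat_d(x_d)=\begin{bmatrix}1\\\omega_d(x_d)\end{bmatrix}\in\mathbb{R}^{2\times 1},
\]
and for every interior index $1\le i\le d-1$ define
\[
\Gmat_i(x_i)=\begin{bmatrix}1 & 0\\ \omega_i(x_i) & 1\end{bmatrix}\in\mathbb{R}^{2\times 2}.
\]
Each factor depends on a single variable, so the construction is indeed a functional tensor train with all internal ranks equal to $2$.

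Next I would verify the identity $\Gmat_0(x_0)\Gmat_1(x_1)\cdots\Gmat_d(x_d)=\sum_{i=0}^{d}\omega_i(x_i)$ by induction on $d$. The base case $d=1$ is immediate: $[\omega_0(x_0),1]\,[1,\omega_1(x_1)]^T=\omega_0(x_0)+\omega_1(x_1)$. For the inductive step, I would show that the partial product up to stage $k$ has the form $[\,\sum_{i=0}^{k}\omega_i(x_i),\;1\,]$; multiplying this row vector by the next interior factor $\Gmat_{k+1}(x_{k+1})$ yields $[\,\sum_{i=0}^{k+1}\omega_i(x_i),\;1\,]$, which preserves the invariant. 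Contracting the final column vector $\Gmat_d(x_d)$ against this row gives the desired sum.

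I do not expect a serious obstacle here: the hard conceptual work was recognizing the two-channel carry structure, and after that the verification reduces to $2\times 2$ matrix multiplication. The only point worth flagging in the write-up is that the construction is obviously \emph{not} unique (any invertible similarity $\Gmat_i\mapsto T_{i-1}^{-1}\Gmat_i T_i$ gives another valid TT), and that the width-$2$ bound on the TT ranks is sharp whenever the $\omega_i$ are linearly independent as univariate functions. Finally, to connect this lemma to its use in Section~\ref{subsec:moments}, I would observe that each coordinate $\xi_j=\sum_k a_{jk}\eta_k+\mu_j$ in \eqref{equ:xij} is exactly a sum of univariate functions in $\eta_k$, so Theorem~\ref{thm:ftt} applies to $\xi_j$; the monomial $(\Amat\veceta+\muvec)^{\basisInd}=\prod_j\xi_j^{\alpha_j}$ is then obtained by the Hadamard-product rule \eqref{equ:TTkron}, which keeps each factor univariate and yields the form \eqref{equ:de-correFun} needed for the closed-form moment expression \eqref{equ:moment}.
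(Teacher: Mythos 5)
Your construction is exactly the factorization the paper displays in Eq.~\eqref{eq:ftt_uni} (boundary row $[\omega_0(x_0),\,1]$, interior $2\times 2$ lower-triangular carries, boundary column $[1,\,\omega_d(x_d)]^T$), and your induction on the partial products correctly verifies it; the paper itself simply cites this result to Oseledets without reproving it, so your argument supplies the same standard proof. No gaps.
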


\begin{align}
\label{eq:ftt_uni}
f(x_0,x_1,\ldots,x_d)&
=
\left(\begin{array} {cc}
\omega_0(x_0) & 1
\end{array}\right)
\left(\begin{array} {cc}
 1 & 0\\
 \omega_1(x_1)&1
\end{array}\right) \ldots
\left(\begin{array} {cc}
 1&0\\
 \omega_{d-1}(x_{d-1})&1
\end{array}\right)
\left(\begin{array} {c}
 1\\
 \omega_d(x_d)
\end{array}\right).
\end{align}

Applying Theorem~\ref{thm:ftt} to \reff{equ:xij}, we can derive a functional tensor train decomposition for  $\xi_j$:
\small
\begin{equation}
\label{equ:xiTTD}
\xi_j=
\left(\mu_j \ 1\right)
\left(\begin{array} {cc}
 1 & 0\\
 a_{j1} \eta_1&1
\end{array}\right)
\ldots
\left(\begin{array} {cc}
 1&0\\
 a_{j(d-1)} \eta_{d-1}&1
\end{array}\right)
\binom{1}{a_{jd} \eta_d}.
\end{equation}\normalsize
Then the expectation is
\begin{equation}
\label{equ:ExiTTD}
\mathbb{E}[\xi_j]=
\left(\mu_j \ 1\right)
\left(\begin{array} {cc}
 1 & 0\\
 0 &1
\end{array}\right)
\ldots
\left(\begin{array} {cc}
 1&0\\
 0&1
\end{array}\right)
\binom{1}{0} =\mu_j.
\end{equation}
The obtained functional tensor trains \reff{equ:xiTTD} can be reused to compute higher-order moments.

\subsubsection{Recurrence Formula}
For each $\basisInd$ with $1<|\basisInd|\le2p$, there exist $\basisInd_1$ and $\basisInd_2$ with $|\basisInd_1|,|\basisInd_2|\le p$, such that
\begin{equation*}
\vecpar^{\basisInd} = \vecpar^{\basisInd_1}\cdot \vecpar^{\basisInd_2}, {\rm \ where \ }
\basisInd = \basisInd_1+\basisInd_2.
\end{equation*}
According to \reff{equ:TTkron}, the tensor-train representation of
$\vecpar^{\basisInd}$ can be obtained as the {Kronecker}
product of the tensor trains of $\vecpar^{\basisInd_1}$ and $\vecpar^{\basisInd_2}$.
Suppose that $\vecpar^{\basisInd_1}=\mathbf{E}_0(\muvec)\mathbf{E}_1(\eta_1)\cdots \mathbf{E}_{d}(\eta_d)$ and $\vecpar^{\basisInd_2}=\mathbf{F}_0(\muvec)\mathbf{F}_1(\eta_1)\cdots \mathbf{F}_{d}(\eta_d)$, then
\small
\begin{align}
\vecpar^{\basisInd}& =
 \Gmat_0(\muvec)\Gmat_1(\eta_1)\cdots \Gmat_{d}(\eta_d),\; {\rm with \ }\Gmat_i(\eta_i) = \mathbf{E}_i(\eta_i) \otimes \mathbf{F}_i(\eta_i).
 \label{equ:tt_higherorder}
\end{align} \normalsize
Here $\mat G_i(\eta_i)\in\mathbb{R}^{2^{|\basisInd|}\times 2^{|\basisInd|}}$  for all $i=1,\ldots,d-1$.
Because $\eta_i$'s are mutually
independent, finally we have
\begin{equation}
\label{equ:Exi}
\mathbb{E}[\vecpar^{\basisInd}] = \Gmat_0(\muvec)\mathbb{E}[\Gmat_1(\eta_1)]\cdots \mathbb{E}[\Gmat_{d}(\eta_d)].
\end{equation}
In other words, the moments  can be easily computed via small-size matrix-vector  products.

\begin{algorithm}[t]
\label{alg:moments}
\caption{A Functional Tensor-Train Method for Computing Basis Functions of Gaussian Mixture Distributions}
      \SetKwInput{Input}{Input}
      \SetKwInput{Output}{Output}
\Input{The mean value, covariance, and weight  for Gaussian mixtures $\{\muvec_i,\Sigmamat_i, w_i\}_{i=1}^r$, and  order $p$.}
\For{$i=1,\ldots,r$}
{
Compute the Cholesky factor $\Amat$ via $\Sigmamat_i=\Amat\Amat^T$;\\
Calculate the functional tensor trains for the first-order and higher-order monomials via \reff{equ:xiTTD} and \reff{equ:tt_higherorder}, respectively;\\ Obtain the moments via \reff{equ:ExiTTD}
and \reff{equ:Exi}.
}
Assemble the multivariate moment matrix $\Mmat$ in \reff{equ:mommat};\\
Compute the basis functions via \reff{equ:basis}. \\
\Output{The multivariate basis functions $\multiGPC_{\basisInd}(\vecpar)$.}
\end{algorithm}

The basis construction framework is summarized in Alg.~\ref{alg:moments}

\textsl{Remark.}
If  the parameters have a  block-wise correlation structure, we can  divide
$\vecpar=[\xi_1,\ldots,\xi_d]^T$into several disjoint groups  $\vecpar_{g_1},\ldots,\vecpar_{g_r}$ in the following way:  the random parameters inside each group are correlated, while the  parameters among different groups are mutually independent. Under this assumption,
the basis functions can be constructed by
 \begin{equation*}
  \multiGPC_{\basisInd}(\vecpar)=\multiGPC_{\basisInd_1}(\vecpar_{g_1})\ldots\multiGPC_{\basisInd_r}(\vecpar_{g_r}).
\end{equation*}
If there are multiple parameters inside a group, we can construct the basis function $\multiGPC_{\basisInd_i}(\vecpar_{g_i})$ by the proposed formula \reff{equ:basis}. Otherwise, the univariate orthogonal basis functions can be calculated via the three-term recurrence relation~\cite{Walter:1982}.
 
 \section{A Sparse Solver: Why and How Does It Work?}
 \label{sec:sparsesolver}

After constructing the basis functions $\{\multiGPC_{\basisInd}(\vecpar)\}_{|\basisInd|=0}^p$,  we need to compute the weights (or coefficients) $\{c_{\basisInd}\}$. For the independent case, many high-dimensional solvers have been developed, such as compressed sensing~\cite{xli2010, hampton2015compressive}, analysis of variance~\cite{ma2010adaptive,zzhang_cicc2014}, model order reduction~\cite{el2010variation}, hierarchical methods \cite{zhang2014calculation,zzhang_cicc2014,zhang2015enabling}, and tensor computation~\cite{zhang2017big, zhang2015enabling,zhang2017tensor}.
For the non-Gaussian correlated case discussed in this paper, we employ a sparse solver to obtain the coefficients.

For convenience, we resort all basis functions and their weights $\{\multiGPC_{\basisInd}(\vecpar), c_{\basisInd}\}_{|\basisInd|=0}^p$ into  $\{\Psi_j(\vecpar), c_j\}_{j=1}^n$.
Given $m$ pairs of parameter samples and simulation values $\{ \vecpar_k, \out(\vecpar_k)\}_{k=1}^m$,
our task is to find the coefficient $\mat c$ such that
\begin{equation}
\Phimat \cvec = \yvec, \text{\ with\ } \Phi_{kj}=\multiGPC_{j}(\vecpar_k), \ y_k=\out(\vecpar_k),
 \label{equ:linearsystem}
 \end{equation}
where $\Phimat\in\mathbb{R}^{m\times n}$ stores the values of $n$ basis functions at $m$ samples and $\mat y\in\mathbb{R}^{m}$ stores the $m$ simulation values. In practice, computing each sample $\out(\vecpar_k)$ requires calling a  time-consuming device- or circuit-level simulator. Therefore, it is desired to use as few simulation samples as possible.

We consider the compressed sensing technique~\cite{xli2010, hampton2015compressive} with $m\ll n$. We seek for the sparsest solution by solving the $\ell _0$-minimization problem
\begin{equation}
\min_{\cvec\in\mathbb{R}^n} \|\cvec\|_0\quad \text{s.t.}\quad \Phimat\cvec=\yvec.
\label{equ:l0optim}
\end{equation}
Here $\|\cvec\|_0$ denotes the number of nonzero elements.
The compressed sensing technique is subject to some assumptions. Firstly, the solution $\mat{c}$ should be sparse in nature, which is generally true in high-dimensional uncertainty quantification. Secondly, the matrix $\frac{1}{\sqrt{m}}\Phimat$ should satisfy the restricted isometry property (RIP) \cite{candes2006stable}:  there exists $0<\kappa_s<1$ such that
\begin{equation}\label{equ:RIP}
(1-\kappa_s)\|\cvec\|_2^2 \le \frac1m\|\Phimat \cvec\|_2^2\le(1+\kappa_s)\|\cvec\|_2^2
\end{equation}
holds for any $\|\cvec\|_0\le s$.
Here, $\|\cdot\|_2$ is the Euclidean norm. Intuitively, this requires that all columns of $\boldsymbol{\Phi}$ are nearly orthogonal to each other.

Compressed sensing techniques have been extensively studied in signal processing. Now we investigate its theoretical condition and accuracy guarantees in our specific setting: high-dimensional uncertainty quantification with non-Gaussian correlated process variations.

\subsection{Conditions to Achieve RIP}

In general, it is NP-hard to check whether $\Phimat$ satisfies the RIP condition~ \cite{bandeira2013certifying}.
When the number of samples $m$ is large enough,
our matrix $\boldsymbol{\Phi}$  satisfies  $\frac1m\mat{\Phi}^T\mat{\Phi}\approx \mat{I}$, i.e.,
\begin{equation}
\label{eq:rip_test}
\frac{1}{m}\sum \limits_{k=1}^m \left(\Psi_i(\vecpar_k) \Psi_j(\vecpar_k)\right) \approx \mathbb{E}\left[ \Psi_i(\vecpar) \Psi_j(\vecpar) \right]=\delta_{ij}.
\end{equation}
Here $\delta_{ij}$ is the delta function.
Hence the RIP condition \reff{equ:RIP} will be satisfied with  a high probability. The following theorem provides a rigorous guarantee.

\begin{theorem}[Conditions for RIP]\label{thm:MBound}
Denote the random variable $X_k^{ij}=\multiGPC_i(\vecpar_k)\multiGPC_j(\vecpar_k)$. Assume that $X_k^{ij}$ is sub-Gaussian \cite{buldygin1980sub} with variance proxy $\sigma$ for any $i,j,k$, i.e.,
\begin{equation}
\mathbb{E}[\exp(\lambda(X_k^{ij}-\delta_{ij}))]\le \exp(\frac{\sigma^2\lambda^2}{2}),\quad \forall\ \lambda\in\mathbb{R},
\end{equation}
 and the random samples $\{\vecpar_k\}_{k=1}^m$ are generated independently. Then the RIP condition \eqref{equ:RIP} holds with a probability at least $1-\eta$ provided that
\begin{equation}\label{equ:Mbd}
m\ge 2\log\left(2/\eta\right)\frac{s^2\sigma^2}{\kappa_s^2}.
\end{equation}
\end{theorem}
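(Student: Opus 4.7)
The plan is to reduce the two-sided RIP inequality to a one-dimensional tail bound for a normalized sum of sub-Gaussian random variables, and then apply a Chernoff-type bound.

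First, I would rewrite the RIP defect as a quadratic form. For any $c\in\mathbb{R}^n$,
$$\frac{1}{m}\|\Phimat c\|_2^2 - \|c\|_2^2 = c^T M c,\qquad M = \tfrac{1}{m}\Phimat^T\Phimat - I.$$
Expanding entrywise and using $X_k^{ij}=\multiGPC_i(\vecpar_k)\multiGPC_j(\vecpar_k)$ together with the orthonormality relation $\mathbb{E}[X_k^{ij}]=\delta_{ij}$, this rearranges as $c^T M c = \tfrac{1}{m}\sum_{k=1}^m Y_k(c)$ with $Y_k(c)=\sum_{i,j} c_i c_j\bigl(X_k^{ij}-\delta_{ij}\bigr)$. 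Thus establishing the RIP inequality reduces to showing $|c^T M c|\le \kappa_s\|c\|_2^2$ with high probability.

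Second, I would control the sub-Gaussian norm of each $Y_k(c)$. Although the collection $\{X_k^{ij}-\delta_{ij}\}_{i,j}$ is not independent across $i,j$, each member is centered sub-Gaussian with variance proxy $\sigma^2$ by hypothesis. The triangle inequality for the sub-Gaussian $\psi_2$-norm gives $\|Y_k(c)\|_{\psi_2}\le \sigma\sum_{i,j}|c_i c_j| = \sigma\|c\|_1^2$. For $s$-sparse $c$, the Cauchy--Schwarz bound $\|c\|_1\le\sqrt{s}\,\|c\|_2$ then yields a variance proxy of at most $s^2\sigma^2\|c\|_2^4$ for $Y_k(c)$.

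Third, since $\{\vecpar_k\}_{k=1}^m$ are drawn independently, the $Y_k(c)$ are iid mean-zero sub-Gaussians, so the average $\tfrac{1}{m}\sum_k Y_k(c)$ inherits variance proxy at most $s^2\sigma^2\|c\|_2^4/m$. The standard Chernoff inequality for sub-Gaussian sums (optimizing $\mathbb{E}[e^{\lambda Y}]\le e^{\sigma_Y^2\lambda^2/2}$ in $\lambda$ via Markov) delivers
$$\mathbb{P}\!\left(\bigl|\tfrac{1}{m}\sum_{k=1}^m Y_k(c)\bigr| > \kappa_s \|c\|_2^2\right) \le 2\exp\!\left(-\frac{m\kappa_s^2}{2 s^2\sigma^2}\right).$$
Setting this upper bound equal to $\eta$ and solving for $m$ reproduces exactly the advertised threshold $m\ge 2\log(2/\eta)\,s^2\sigma^2/\kappa_s^2$.

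The main obstacle is the passage from a single fixed $s$-sparse vector $c$ to the uniform statement over all $s$-sparse $c$ that the RIP formally requires. A strictly uniform argument would normally call for an $\epsilon$-net on the sphere restricted to each of the $\binom{n}{s}$ supports, introducing an additional $s\log(en/s)$ factor in the sample complexity. Since the bound in \eqref{equ:Mbd} matches the fixed-$c$ concentration exactly and contains no such factor, I expect the paper's proof to adopt the concentration reading of RIP (giving the inequality for any prescribed sparse $c$ with the stated probability), or alternatively to absorb the net/union-bound cost into the constants $\sigma$ and $s$; if a fully uniform version is sought, the same scheme can be extended by tightening the concentration step with a covering argument.
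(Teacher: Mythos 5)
Your computation is correct and lands on exactly the stated bound, but your route differs from the paper's. The paper never touches the quadratic form $c^T(\frac1m\Phimat^T\Phimat-I)c$ directly: it observes that $\|\frac1m\Phimat_s^T\Phimat_s-\mat{I}_s\|_2\le\|\frac1m\Phimat_s^T\Phimat_s-\mat{I}_s\|_F\le\kappa_s$ is a sufficient condition for RIP, reduces this to the entrywise requirement $|\frac1m\sum_k X_k^{ij}-\delta_{ij}|\le\kappa_s/s$ (so that the $s\times s$ Frobenius norm is at most $s\cdot\kappa_s/s$), and then applies the scalar sub-Gaussian tail bound to a single entry, yielding $2\exp(-m\kappa_s^2/(2s^2\sigma^2))\le\eta$. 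You instead fix an $s$-sparse $c$, write the RIP defect as $\frac1m\sum_k Y_k(c)$ with $Y_k(c)=\sum_{i,j}c_ic_j(X_k^{ij}-\delta_{ij})$, and use sub-additivity of the variance proxy plus $\|c\|_1\le\sqrt{s}\|c\|_2$ to get the same exponent. The two decompositions buy different things: the paper's entrywise reduction separates the probabilistic step (a one-dimensional Hoeffding bound) from deterministic linear algebra, and would be made fully uniform by a union bound over only the $O(n^2)$ matrix entries (a $\log n$ overhead); your vector-level argument is the standard template for sharper RIP results (e.g., via Hanson--Wright), but making it uniform requires a net over the sparse sphere costing $s\log(en/s)$. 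The uniformity gap you flag at the end is real, and it is equally present in the paper's proof: the paper bounds the failure probability of a \emph{single} entry by $\eta$ without taking the union over all $(i,j)$ pairs, so its stated sample complexity, like yours, is really a fixed-instance concentration bound rather than a fully uniform RIP guarantee. Your honest identification of this issue is the right reading of what the theorem actually establishes.
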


\begin{proof}
 See Appendix \ref{app:Mbound}.
\end{proof}

\subsection{Error Bounds in Uncertainty Quantification}
Under the RIP condition, we are able to approximate the solution with good accuracy. Now we provide the error bounds for $\mat c$ and for the stochastic solution $\mat y(\vecpar)$. In our implementation, we solve the following constrained optimization
\begin{equation}\label{equ:fix_sparse}
    \mat{c}^*=\arg\min\limits_{\mat c}\|\mat{\Phi}\mat{c}-\mat{y}\|_2\quad \text{s.t. } \|\mat{c}\|_0\le s.
\end{equation}
{The above problem can be solved by any $\ell_0$-minimization \ccf{solver}, such as COSAMP \cite{needell2009cosamp}, difference-of-convex \cite{zheng2014successive}, and penalty decomposition \cite{lu2015penalty}.}
The error bound is presented in the following theorem.

\vspace{1mm}
\begin{theorem}[Coefficient error]
\label{thm:accuracy}
Suppose  $\mat{y}=\mat{\Phi c}+\mat{e}$, where $\mat{e}$ is some random noise and $\mat c$ is the exact solution.
Let $\mat{c}_s$ be a sparse vector that remains the $s$ largest-magnitude components of $\mat{c}$ and keeps all other components to be zero, and  $\epsilon=\|\mat{\Phi}\mat{c}^*-\mat{y}\|_2$ is the residue in \reff{equ:fix_sparse}.
If $\mat{\Phi}$ satisfies the $(2s,\kappa_{2s})$-RIP condition, then any solution $\mat{c}^*$ of \reff{equ:fix_sparse} satisfies
\begin{equation*}
    \|\mat{c}-\mat{c}^*\|_2\le\alpha_0\|\mat{c}_s-\mat{c}\|_1+\alpha_1\|\mat{e}\|_2+\alpha_1\epsilon.
\end{equation*}
Here, $\alpha_0=1+\frac{1.7071 \sqrt{1+\kappa_{2s}}}{m(1-\kappa_{2s})\sqrt{s}}$ and $\alpha_1=\frac{1}{m(1-\kappa_{2s})}$ are constants.
\end{theorem}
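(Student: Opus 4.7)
The plan is to adapt the classical Cand\`es--Romberg--Tao proof of stable sparse recovery to the $\ell_0$-constrained formulation \reff{equ:fix_sparse}. Set $\mat h=\mat c^*-\mat c$; the goal is to bound $\|\mat h\|_2$ using two ingredients: a residue bound on $\|\mat\Phi\mat h\|_2$ coming from feasibility of $\mat c_s$ together with optimality of $\mat c^*$, and the restricted isometry of $\mat\Phi$ on $2s$-sparse vectors. From $\mat y=\mat\Phi\mat c+\mat e$ and $\|\mat\Phi\mat c^*-\mat y\|_2=\epsilon$, the triangle inequality gives the residue bound immediately:
\[
\|\mat\Phi\mat h\|_2=\|(\mat\Phi\mat c^*-\mat y)+\mat e\|_2\le \epsilon+\|\mat e\|_2.
\]

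Next I would carry out the standard index-set splitting. Let $T_0$ index the $s$ largest-magnitude entries of $\mat c$ (so $\mat c_{T_0}=\mat c_s$) and let $S=\operatorname{supp}(\mat c^*)$ with $|S|\le s$. Put $\Omega=T_0\cup S$, so $|\Omega|\le 2s$; on $\Omega^c$ the minimizer vanishes, hence $\mat h_{\Omega^c}=-\mat c_{\Omega^c}$ consists only of tail entries of $\mat c$. To control this tail, sort the indices of $T_0^c$ in decreasing order of $|c_i|$ and split them into consecutive blocks $T_1,T_2,\ldots$ of length $s$. The classical sorting inequality $\max_{i\in T_{j+1}}|c_i|\le\|\mat c_{T_j}\|_1/s$ yields $\|\mat c_{T_j}\|_2\le\|\mat c_{T_{j-1}}\|_1/\sqrt{s}$ for $j\ge 2$, and telescoping gives the tail estimate
\[
\sum_{j\ge 2}\|\mat c_{T_j}\|_2\le \|\mat c-\mat c_s\|_1/\sqrt{s}.
\]

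I would then bring in the RIP hypothesis. Applied to $\mat h_\Omega$, which is supported on at most $2s$ coordinates, the lower bound reads $\sqrt{m(1-\kappa_{2s})}\,\|\mat h_\Omega\|_2\le\|\mat\Phi\mat h_\Omega\|_2$. Writing $\mat\Phi\mat h_\Omega=\mat\Phi\mat h-\mat\Phi\mat h_{\Omega^c}=\mat\Phi\mat h+\sum_{j\ge 1}\mat\Phi\mat c_{T_j}$ and applying the upper RIP bound $\|\mat\Phi\mat c_{T_j}\|_2\le \sqrt{m(1+\kappa_{2s})}\|\mat c_{T_j}\|_2$ to each $s$-sparse block, together with the sorting sum, yields an inequality of the form
\[
\|\mat h_\Omega\|_2\le C_0\,\frac{\|\mat c-\mat c_s\|_1}{\sqrt{s}}+C_1\bigl(\|\mat e\|_2+\epsilon\bigr),
\]
with explicit constants $C_0,C_1$ depending on $\kappa_{2s}$ and $m$. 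The stated bound then follows from $\|\mat h\|_2\le\|\mat h_\Omega\|_2+\|\mat h_{\Omega^c}\|_2$ together with the coarse tail estimate $\|\mat h_{\Omega^c}\|_2=\|\mat c_{\Omega^c}\|_2\le\|\mat c-\mat c_s\|_1$, whose constant is absorbed into the leading ``$1$'' in $\alpha_0$.

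The main obstacle is the careful book-keeping required to match the constants $\alpha_0=1+\tfrac{1.7071\sqrt{1+\kappa_{2s}}}{m(1-\kappa_{2s})\sqrt{s}}$ and $\alpha_1=\tfrac{1}{m(1-\kappa_{2s})}$ exactly. The $1/(m(1-\kappa_{2s}))$ scaling — rather than the $1/\sqrt{m(1-\kappa_{2s})}$ that would arise from a single square-root step — strongly suggests that the proof keeps the RIP inequality in its squared form $\|\mat h_\Omega\|_2^2\le\tfrac{1}{m(1-\kappa_{2s})}\|\mat\Phi\mat h_\Omega\|_2^2$ and then inserts an arithmetic--geometric or Cauchy--Schwarz step that converts the $\ell_2^2$ bound to an $\ell_2$ bound while absorbing the extra $\sqrt{m}$ factor. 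Verifying that the numerical constant $1.7071\approx 1+1/\sqrt{2}$ emerges naturally from this combination (as opposed to the standard Cand\`es constants $\sqrt 2,2\sqrt 2$ from the cleaner $\ell_1$ analysis) is the delicate technical step; once the constants align, the remainder is algebraic rearrangement.
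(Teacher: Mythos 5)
Your overall strategy (a residue bound plus RIP on a $2s$-sparse piece) is sound, but you have chosen the heavy Cand\`es--Tao route where the paper uses a much lighter decomposition, and the part you explicitly defer --- matching the constants --- is exactly where your route and the paper's diverge. The paper splits $\mat{c}-\mat{c}^*=(\mat{c}-\mat{c}_s)+(\mat{c}_s-\mat{c}^*)$ and observes that $\mat{c}_s-\mat{c}^*$ is already $2s$-sparse (both vectors are $s$-sparse), so the lower RIP bound applies to it directly; no support set $\Omega$, no sorted blocks $T_j$, and no telescoping tail estimate are needed. The quantity $\|\mat{\Phi}(\mat{c}_s-\mat{c}^*)\|_2$ is then bounded by $\|\mat{\Phi}(\mat{c}_s-\mat{c})\|_2+\epsilon+\|\mat{e}\|_2$ via the triangle inequality, and $\|\mat{\Phi}(\mat{c}_s-\mat{c})\|_2$ is controlled by Proposition~3.5 of \cite{needell2009cosamp}, namely $\|\mat{\Phi}\mat{x}\|_2\le\sqrt{1+\kappa_{2s}}\,(\|\mat{x}\|_2+\tfrac{1}{\sqrt{2s}}\|\mat{x}\|_1)$, which is where $1.7071\approx 1+1/\sqrt{2}$ actually comes from --- not from the squared-RIP/Cauchy--Schwarz mechanism you conjecture. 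The leading $1$ in $\alpha_0$ comes from the crude bound $\|\mat{c}-\mat{c}_s\|_2\le\|\mat{c}-\mat{c}_s\|_1$, which does match your last step.

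Two concrete cautions. First, your block decomposition applies the upper RIP bound to each $\mat{c}_{T_j}$ and sums, which produces constants of the standard Cand\`es form (ratios like $\sqrt{(1+\kappa_{2s})/(1-\kappa_{2s})}$ after taking a single square root of the RIP inequality); it will not reproduce $\alpha_1=\tfrac{1}{m(1-\kappa_{2s})}$ as stated. You correctly sense that this scaling is anomalous --- under the normalization \reff{equ:RIP} the natural factor is $1/\sqrt{m(1-\kappa_{2s})}$ --- but the paper obtains its factor simply by writing the RIP consequence in that un-square-rooted form, not by an AM--GM step, so your proposed resolution does not locate the actual source of the constant. Second, in $\mat{\Phi}\mat{h}_{\Omega^c}=-\sum_{j\ge 1}\mat{\Phi}\mat{c}_{T_j}$ the $j=1$ block is not covered by your tail estimate $\sum_{j\ge 2}\|\mat{c}_{T_j}\|_2\le\|\mat{c}-\mat{c}_s\|_1/\sqrt{s}$ and needs separate handling before any constants can be pinned down. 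In short: your architecture can deliver a bound of this general form, but as written it does not yield the theorem's specific constants, and the paper's own argument is both substantially shorter and the true origin of those constants.
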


\begin{proof}
 See Appendix \ref{append:L0error}.
\end{proof}

Theorem \ref{thm:accuracy} shows that numerical error of computing $\mat{c}$ consists of three parts. The first part exists because the  exact solution may not be exactly $s$-sparse. The second part is caused by the numerical errors in device/circuit simulation. The third part is caused by the numerical error in an optimization solver.

Now we consider the error of approximating $y(\vecpar)$. For any square-integrable $y(\vecpar)$,   denote its $\ell_2$ norm as
\begin{equation}
   {\|y(\vecpar)\|_{2}=\sqrt{\mathbb{E}_{\vecpar}[y^2(\vecpar)]}.}
\end{equation}
Further, let $y_p(\vecpar)$ be the projection of $y(\vecpar)$ to the space $\ten{S}_p$. In other words, $y_p(\vecpar)=\sum_{|\basisInd|=0}^pc_{\basisInd}\multiGPC_{\basisInd}(\vecpar)$ is the $p$th-order approximation that we are seeking for. The approximation error is shown as follows.

\vspace{1mm}
\begin{theorem}[Approximation error]
\label{thm:approxerr}
For any square integrable function $y(\vecpar)$, the approximation error is bounded by
\begin{equation}\label{equ:approxerr}
    \|y(\vecpar)-y^*(\vecpar)\|_2 \le\|\mat{c}-\mat{c}^*\|_2 +\|y(\vecpar)-y_p(\vecpar)\|_2,
\end{equation}
{where $y^*(\vecpar)=\sum_{|\basisInd|=0}^p c_{\basisInd}^*\multiGPC_{\basisInd}(\vecpar)$ is our constructed surrogate function}, and $\mat{c}^*=[\cdots, c_{\basisInd}^*, \cdots]$ is the solution of \reff{equ:fix_sparse}.
\end{theorem}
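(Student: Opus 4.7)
The plan is to decompose the total approximation error into a component that lives inside the polynomial subspace $\ten{S}_p$ and a component that measures how far $y(\vecpar)$ sits from $\ten{S}_p$. Both $y^*(\vecpar)$ and $y_p(\vecpar)$ lie in $\ten{S}_p$ by construction, while $y(\vecpar)-y_p(\vecpar)$ is exactly the residual of the orthogonal projection of $y$ onto $\ten{S}_p$. Invoking the triangle inequality for the $L^2(\vecpar,\rho(\vecpar))$ norm immediately gives
\begin{equation*}
\|y(\vecpar)-y^*(\vecpar)\|_2 \le \|y(\vecpar)-y_p(\vecpar)\|_2 + \|y_p(\vecpar)-y^*(\vecpar)\|_2,
\end{equation*}
so the task reduces to bounding the in-subspace error $\|y_p(\vecpar)-y^*(\vecpar)\|_2$ by the Euclidean coefficient error $\|\mat c - \mat c^*\|_2$.

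Next I would write both functions in the common basis: $y_p(\vecpar)-y^*(\vecpar) = \sum_{|\basisInd|=0}^{p}(c_{\basisInd}-c_{\basisInd}^*)\multiGPC_{\basisInd}(\vecpar)$. Squaring and taking expectations, the orthonormality property $\mathbb{E}[\multiGPC_{\basisInd}(\vecpar)\multiGPC_{\boldsymbol{\beta}}(\vecpar)]=\delta_{\basisInd\boldsymbol{\beta}}$ established in Section III collapses the cross terms, leaving
\begin{equation*}
\|y_p(\vecpar)-y^*(\vecpar)\|_2^2 = \sum_{|\basisInd|=0}^{p}(c_{\basisInd}-c_{\basisInd}^*)^2 = \|\mat c - \mat c^*\|_2^2.
\end{equation*}
Taking square roots and chaining with the triangle inequality yields the claimed bound \eqref{equ:approxerr}.

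There is essentially no hard step here; the entire argument rests on two facts already in hand, namely the triangle inequality in the $L^2$ norm and the orthonormality of $\{\multiGPC_{\basisInd}\}$. The only small bookkeeping point to be careful about is that the vectors $\mat c$ and $\mat c^*$ are indexed over the same set of basis multi-indices with $|\basisInd|\le p$, so that the identity between the functional $L^2$ norm and the Euclidean norm on coefficients is exact (no extra zero-padding or truncation terms appear). Once this alignment is noted, the result follows in a few lines.
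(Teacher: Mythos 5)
Your proposal is correct and follows exactly the paper's own argument: the triangle inequality splits the error into $\|y-y_p\|_2$ plus the in-subspace term, and the orthonormality of the basis $\{\multiGPC_{\basisInd}\}$ converts $\|y_p-y^*\|_2$ into the Euclidean coefficient error $\|\mat{c}-\mat{c}^*\|_2$. Nothing is missing; your remark about the two coefficient vectors being indexed over the same multi-index set is a reasonable bookkeeping point that the paper leaves implicit.
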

\vspace{1mm}

\begin{proof}
The detailed proof is shown in Appendix \ref{app:approxerr}.
\end{proof}

 For the bound shown in \reff{equ:approxerr}, the first term is caused by the numerical error in computing $\mat{c}$; The second term arises from the distance from  $y(\vecpar)$ to the $p$th-order polynomial space $\ten{S}_p$, which will be sufficiently small if $p$ is large enough.







\begin{algorithm}[t]
\label{alg:AdaptiveSparseSolver}
\caption{An Adaptive Sparse  Solver} 
      \SetKwInput{Input}{Input}
      \SetKwInput{Output}{Output}
\Input{Input a set of candidate samples $\Omega^0$ and  basis functions $\{\multiGPC_j(\vecpar)\}_{j=1}^n$.}

Choose an initial sample set $\Omega\subset \Omega ^0$ via the rank-revealing QR factorization, with $m=|\Omega|\ll n$.      \\
Call the simulator to calculate $\out(\vecpar)$ for all $\vecpar \in \Omega$.

\For{Iteration $t=1,2,\ldots$}
{Solve the $\ell_0$ minimization  problem \reff{equ:fix_sparse} to obtain $\cvec$.\\
Chose the next optimal sample point $\vecpar_{m+1}$ via the D-optimal, R-optimal, or E-optimal criteria. \\
Call the simulator to calculate $\out(\vecpar_{m+1})$.\\  Let $m\leftarrow m+1$. \\
\If{the stopping criterion is satisfied}{Stop}
}
\Output{The  coefficient  $\cvec$ and the surrogate model   $y(\vecpar)=\sum_{|\basisInd|=0}^pc_{\basisInd}\multiGPC_{\basisInd}(\vecpar)$.}
\end{algorithm}

\section{Adaptive Sample Selection}\label{sec:adaptive}

The system equation \eqref{equ:linearsystem} is normally set up by using some random samples~\cite{xli2010}. In practice, some samples are informative, yet others are not. Therefore, we present some adaptive sampling methods to improve the performance of the sparse solver. Our method uses a rank-revealing QR decomposition to pick  initial samples, then it selects subsequent samples using a D-optimal, R-optimal, or E-optimal criterion.
The whole framework is summarized in Alg.~\ref{alg:AdaptiveSparseSolver}.

\subsection{Initial Sample Selection}
We first select a small number of initial samples from a pool of $m_0$ random candidate samples $\Omega^0$. By evaluating the basis functions at all candidate samples, we form a matrix $\Phimat^0\in\mathbb{R}^{m_0\times n}$ whose $j$-th rows stores the values of $n$ basis functions at the $j$-th candidate sample. Here, we use the determinant to measure the importance. Specifically, suppose that the rank of $\mat{\Phi}^0$ is greater than $m$, we want to separate the rows of $\mat{\Phi}$ into two parts via a maximization problem  \begin{equation}\label{equ:D-optimal}
\max_{\mat{P}}\ \det(\mat{\Phi}_a^T\mat{\Phi}_a),\text{ where }\mat{(\Phi^0)}^T\mat P=[\mat{\Phi}_a^T\quad
\mat{\Phi}_b^T].
\end{equation}
We achieve this via a rank-revealing QR factorization~\cite{gu1996efficient}:
\begin{equation}
(\Phimat^0)^T\mathbf{P} = \mathbf{QR},\text{ where }\mat{R}=\left[\begin{array}{cc}
 \mathbf{R}_{11}&\mathbf{R}_{12}\\
 \mathbf{0}&\mathbf{R}_{22}
 \end{array}\right].
\end{equation}
Here£¬ $\mathbf{P}$ is a permutation matrix,  $\mathbf{Q}$ is an orthogonal matrix, and  $\mathbf{R}_{11}\in\mathbb{R}^{m\times m}$ is an upper triangular matrix.
We  compute the permutation matrix
 $\mathbf{P}$  such that  the absolute values of diagonal elements in $\mat{R}$ are in a descending order.
In other words, the first $m$ columns of $\mathbf{P}$ indicate the most informative $m$ rows of $\Phimat^0$. We keep these rows and the associated parameter samples $\Omega$.
The above initial samples have generated a  matrix $\Phimat\in\mathbb{R}^{m\times n}$,  then we further add  new informative samples based on a D-optimal, R-optimal, or E-optimal criterion.

\subsection{D-optimal Adaptive Sampling}

Our first method is motivated by~\cite{prasad2016multidimensional, diaz2017sparse} but differs in the numerical implementation: the method in~\cite{prasad2016multidimensional} defined the candidate sample set as   quadrature nodes, which are unavailable for non-Gaussian correlated case;  the method in~\cite{diaz2017sparse} used rank-revealing QR at each iteration, whereas our method selects new samples via an optimization method.

 \begin{figure*}[t]
    \centering        \includegraphics[width=4.6in]{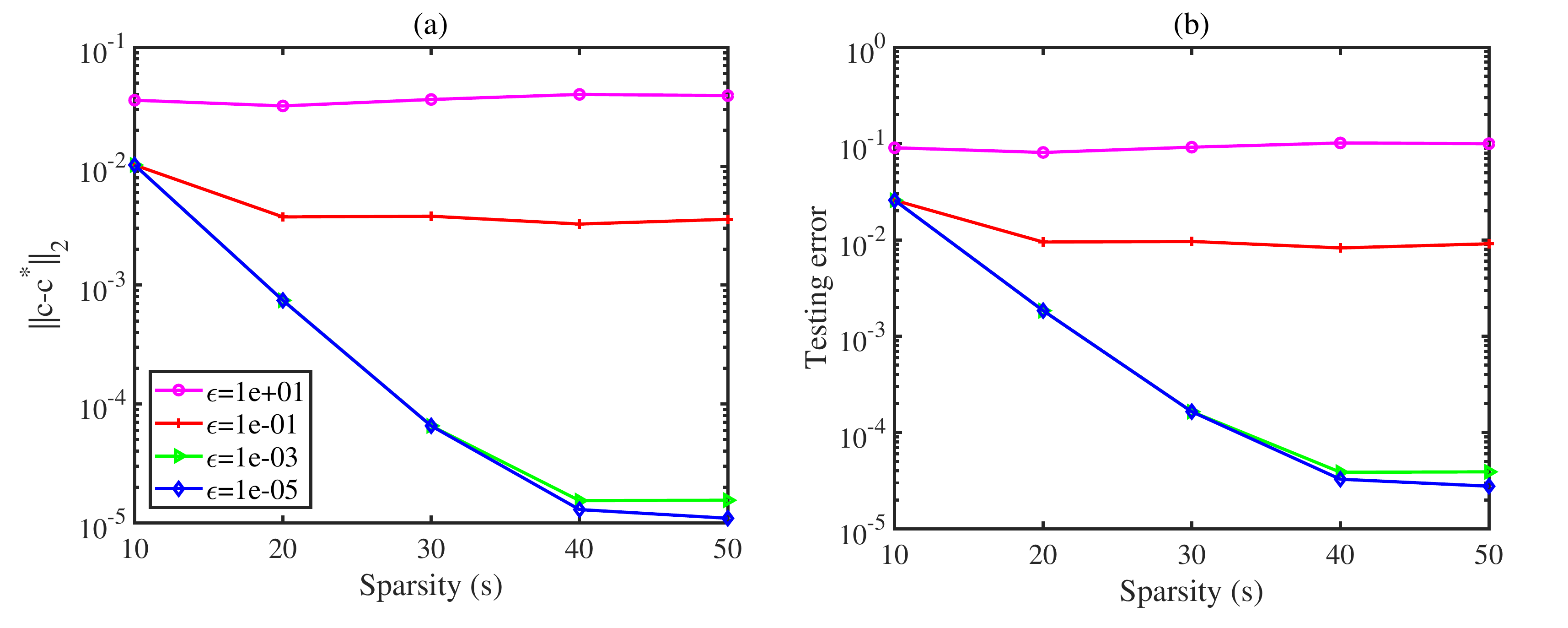}
\caption{Numerical error for the synthetic experiment. (a) Errors in the computed coefficients of our basis functions; (b) Testing errors of approximating $y(\vecpar)$.}
    \label{fig:synethic_err_c}
\end{figure*}

Assume that our sparse solver has computed $\mat{c}$ based on the available samples, then we fix the indices of the nonzero elements in $\mat{c}$ and update the samples and solutions sequentially.
We denote the locations of nonzero coefficients as $\mathcal{C}=\{i_1,\ldots,i_s\}$ with $m>s$, and denote
$\Phimat_s\in\mathbb{R}^{m\times s}$ as the sub-matrix of  $\Phimat$ generated by extracting the columns associated with $\mathcal{C}$.
 The next most informative sample $\vecpar_k$ associated with the row vector $\xvec(\vecpar_k)=[\multiGPC_{i_1} (\vecpar_k) \cdots,\multiGPC_{i_s}(\vecpar_k)]\in\mathbb{R}^{1\times s}$ can be decided via solving the following problem:
\begin{equation}
\label{equ:detmax}
\max_{\vecpar_k\in \Omega^0\setminus \Omega}\quad \det(\Phimat_s^T\Phimat_s + \xvec(\vecpar_k)^T \xvec(\vecpar_k)),
\end{equation}
where $ \Omega^0\setminus \Omega$ includes the sample points in $\Omega^0$ but not in $\Omega$.
It is unnecessary to compute the above determinant for every sample. The matrix determinant lemma \cite{harville1997matrix} shows $\det(\Phimat_s^T\Phimat_s+ \xvec(\vecpar_k)^T\xvec(\vecpar_k)) = \det(\Phimat_s^T\Phimat_s)(1+  \xvec(\vecpar_k)$ $(\Phimat_s^T\Phimat_s)^{-1}\xvec(\vecpar_k)^T)$. Therefore, \reff{equ:detmax} can be solved via \begin{equation}
  \label{equ:easy_detmax}
  \max_{\vecpar_k\in \Omega^0\setminus \Omega}\quad \xvec(\vecpar_k)(\Phimat_s^T\Phimat_s)^{-1}\xvec(\vecpar_k)^T.
  \end{equation}
{In our experiments, we obtain the optimal solution of \reff{equ:easy_detmax} by comparing the  objective values for all sample points in $ \Omega^0\setminus \Omega$.}

   After getting the new sample,
we update the matrix $\Phimat_s:=\left[\begin{array}{c}\Phimat_s\\ \xvec(\vecpar_k)\end{array}\right]$,  update $(\Phimat_s^T\Phimat_s)^{-1}$ via the Sherman-Morrison formula \cite{sherman1950adjustment}, and recompute the $s$ nonzero elements of $\cvec$ by
\begin{equation}
\cvec_1=(\Phimat_s^T\Phimat_s)^{-1}\Phimat_s^T \yvec.
\label{equ:updatec}
\end{equation}
Inspired by \cite{malioutov2010sequential}, we stop the iteration if $\cvec_1$ is close to its previous step or if the maximal iteration number is reached.

\subsection{R-optimal Adaptive Sampling}
The RIP condition \reff{equ:RIP} is equivalent to
\begin{equation}\label{equ:RIP2eig}
    (1-\kappa_s)\le\lambda_{\min}(\frac1m \mat{\Phi}^T_s\mat{\Phi}_s)\le\lambda_{\max}(\frac1m \mat{\Phi}^T_s\mat{\Phi}_s)\le(1+\kappa_s).
\end{equation}
Here $\mat{\Phi}_s$ contains arbitrary $s$ columns of $\mat{\Phi}$. The constraint in \reff{equ:RIP2eig} is equivalent to $\|\frac1m \mat{\Phi}^T_s\mat{\Phi}_s-\mat{I}\|_2\le\kappa_s$.  Therefore, we can select the next sample by minimizing $\|\frac1m \mat{\Phi}^T_s\mat{\Phi}_s-\mat{I}\|_2$. We refer this method as R-optimal  because it optimizes the RIP condition.

Suppose that the locations of nonzero coefficients are fixed as $\mathcal{C}=\{i_1,\ldots,i_s\}$.
 The next sample $\vecpar_k$ associated with the row vector $\xvec(\vecpar_k)=[\multiGPC_{i_1} (\vecpar_k) \cdots,\multiGPC_{i_s}(\vecpar_k)]$ is found via the following optimization problem
\begin{equation}\label{equ:R-opt}
    \min_{\vecpar_k\in\Omega^0\setminus \Omega} \left\|\frac{1}{m+1}\left( \mat{\Phi}^T_s\mat{\Phi}_s+\xvec(\vecpar_k)^T \xvec(\vecpar_k)\right)-\mat{I}\right\|_2.
\end{equation}
{In our implementation, we solve \reff{equ:R-opt} by comparing the objective function values of all available sample points.}

\begin{figure*}[t]
    \centering
        \includegraphics[width=5.0in, height=3.2in]{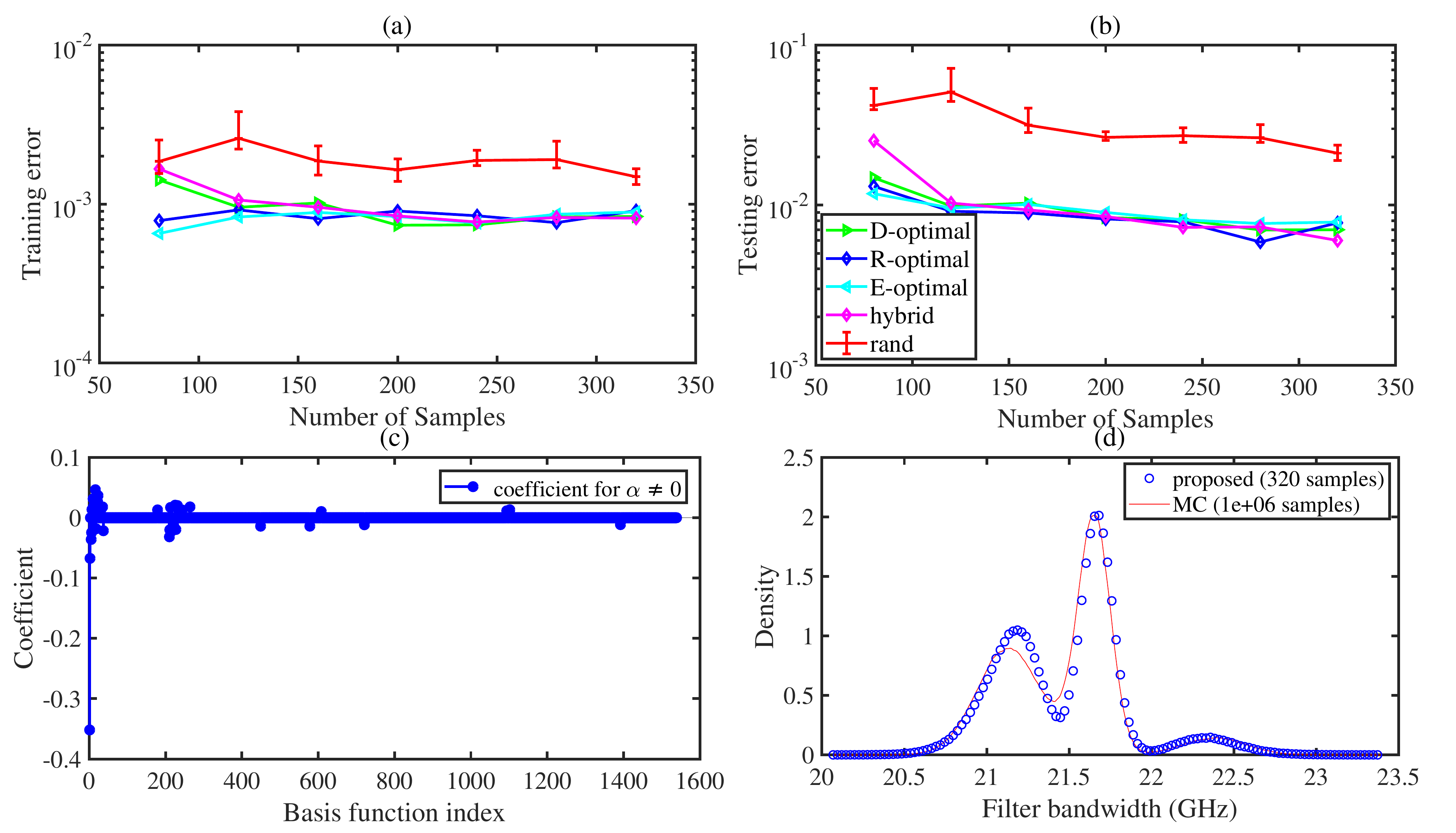}
\caption{Results for the {photonic} band-pass filter.
(a) training error; (b) testing error on 9000 new samples;
(c) calculated coefficients/weights of our proposed basis functions;
(d) probability density functions of the 3-dB bandwidth obtained with our proposed method and with Monte Carlo (MC), respectively.}
    \label{fig:res_PIC19}
\end{figure*}

\subsection{E-optimal Adaptive Sampling}

Both D-optimal and R-optimal methods solve an optimization problem and have to explore the entire sample sets. In contrast, we can exploit local information to pick the next sample. Inspired by the exploitation in Bayesian optimization, we propose to find the next sample in a  neighbourhood where the approximation error is large.
Specifically, we group the existing samples into $k$ clusters ${\cal U}^1,\ldots, {\cal U}^k$, and compute the average approximation error as
\begin{equation}
    \text{res}({\cal U}^i)=\text{mean}(\mat{\Phi}^i\mat{c}-\mat{y}^i),
\end{equation}
where $\mat{\Phi}^i\in\mathbb{R}^{|{\cal U}^i|\times n}$ contains the $|{\cal U}^i|$ rows of $\mat{\Phi}$ that are associated with all samples in ${\cal U}^i$.
Afterwards, we choose the next sample nearest to the $i$-th cluster center, where $i$ is the index of cluster with the maximal  residue, i.e.,  $i=\arg\max\text{res}({\cal U}^i)$. This approach is called E-optimal because it \textit{exploits} the samples in a neighbourhood.

\section{Numerical Results}
\label{sec:result}

We test our algorithms by a synthetic example and three real-world benchmarks, including a photonic band-pass filter, a 7-stage CMOS ring oscillator, and an array waveguide grating (AWG) with 41 waveguides.
For each example, we adaptively select a small number of samples from a pool of $1000$ candidate samples,
and we use $9000$ different samples for accuracy validation. We employ COSAMP~\cite{needell2009cosamp} to solve \reff{equ:fix_sparse}, {and stop it when the residue in the objective function satisfies $\|\mat{\Phi}\mat{c}-\mat y\|_2\le\epsilon$}.
We define the relative error as:
\begin{equation}
\label{equ:res}
\epsilon_r = \|\Phimat \cvec - \yvec\|_2 / \|\yvec\|_2.
\end{equation}
We refer $\epsilon_r$ as a training error if the samples are those used in our sparse solver, and as a testing (or prediction) error if an entirely new set of samples are used. The stopping criterion for Alg.~\ref{alg:AdaptiveSparseSolver} is either the maximal number of samples is attained or  the training error is small enough. We refer our methods as ``D-optimal", ``E-optimal", ``R-optimal", and ``hybrid" (combinations of all three methods), dependent on different sample selection criterion. We compare our methods with ``rand" approach that chooses all samples by Monte Carlo. For the ``rand" approach, we run the experiment 10 times using 10 sets of different samples, and report the mean values and variances of $\epsilon_r$. \ccf{The CPU time of obtaining each simulation sample highly depends on the specific design problem and on the hardware platform. In most cases, the simulation cost dominates the total cost of uncertainty quantification. Therefore, we compare the costs of different methods mainly based on their total numbers of simulation samples.}

\subsection{A Synthetic Example}

We firstly use a synthetic example to verify our theoretical results in Section~\ref{sec:sparsesolver}. This example contains $d=8$ non-Gaussian correlated random parameters $\vecpar$ and we approximate the stochastic solution $y(\vecpar)$ by our basis functions with a total order bounded by $p=3$.
The sparse coefficient $\mat{c}$ is given \textit{a \ccf{priori}}, and the output $y(\vecpar)$ has a closed-form  as
\begin{equation}
    y(\vecpar)=\mat{\Phi}(\vecpar)\mat c+\mat{e},
\end{equation}
where $\mat e$ is a  random simulation noise satisfying $\|\mat e\|_2=10^{-6}$.

\begin{figure}[t]
    \centering        \includegraphics[width=2.0in]{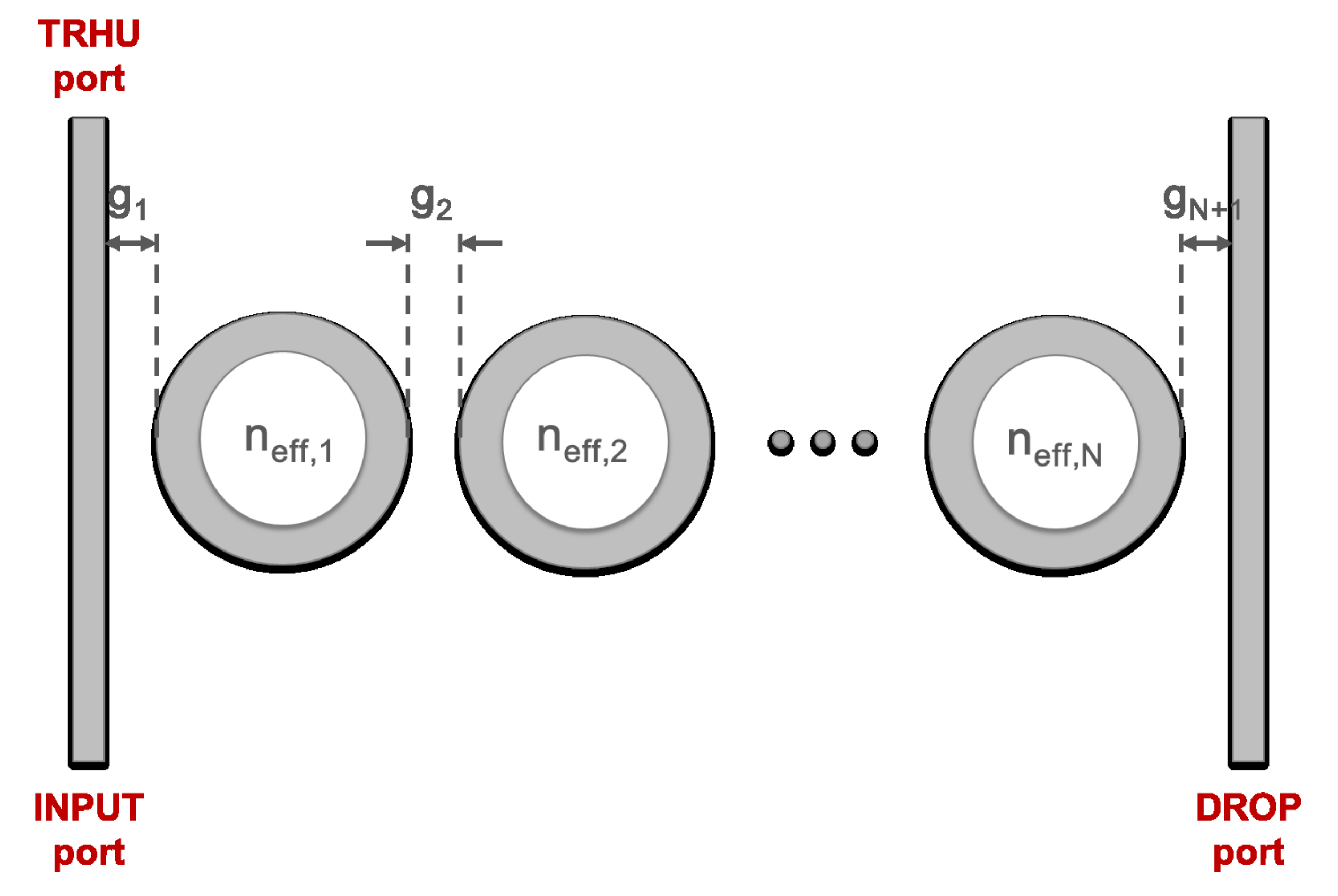}
\caption{A band-pass filter with 9 micro-ring resonators.}
    \label{fig:photonics}
\end{figure}

\begin{figure*}[t]
    \centering
        \includegraphics[width=5.0in,height=3.1in]{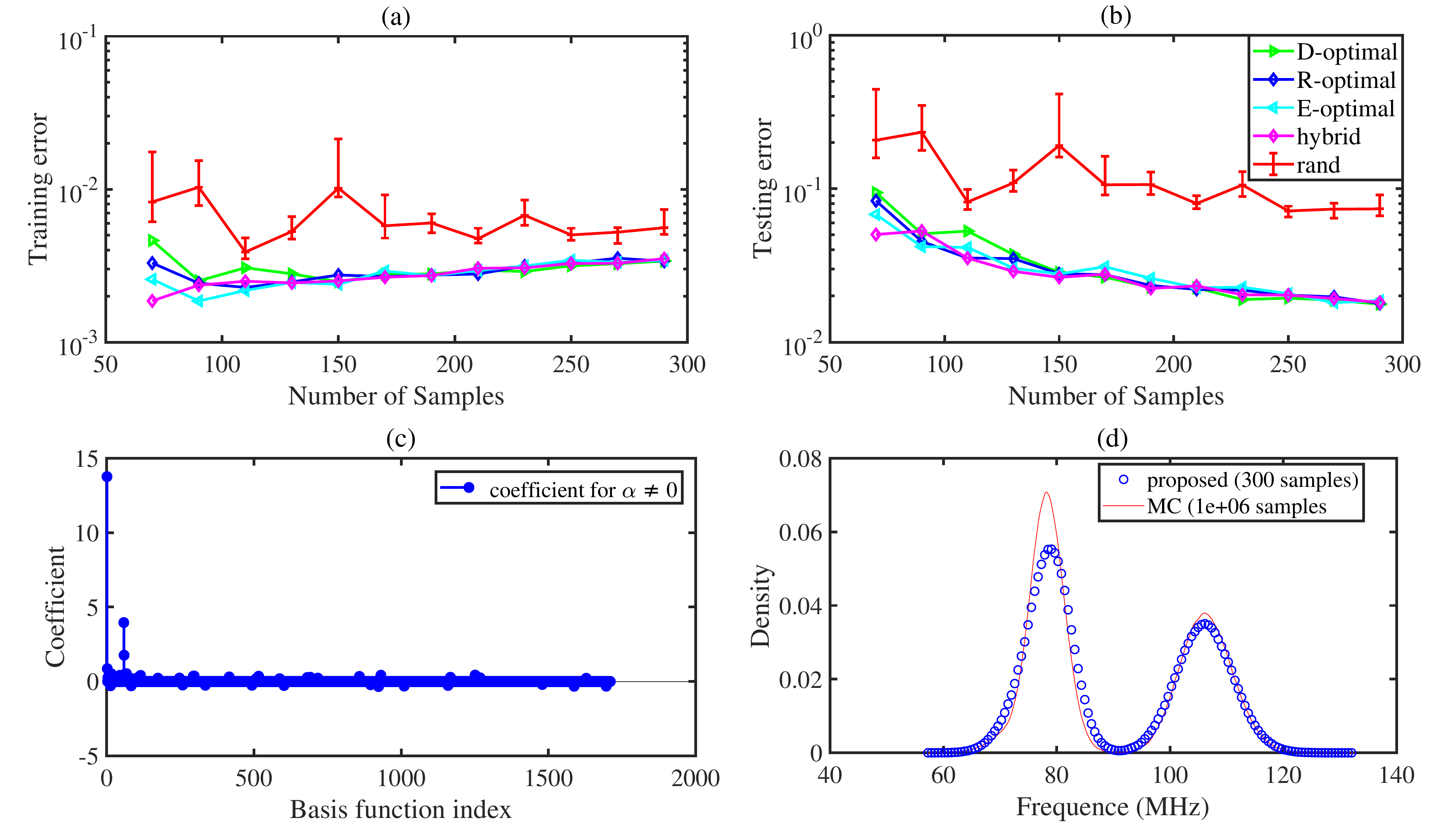}
\caption{Results for the CMOS ring oscillator. (a) training error; (b) testing error on 9000 new samples;
(c) calculated coefficients/weights of our proposed basis functions;
(d) probability density functions of the oscillator frequency obtained by our method with 300 training samples and by Monte Carlo, respectively.}
    \label{fig:res_ring57}
\end{figure*}

In order to verify Theorems \ref{thm:accuracy} and \ref{thm:approxerr}, we generate $m=200$ random samples and  approximate the sparse coefficients via an $\ell_0$-minimization  \reff{equ:fix_sparse}.   {We stop the algorithm when $\|\mat{\Phi c-y}\|_2\le\epsilon$}.
Fig.~\ref{fig:synethic_err_c}~(a) shows that  when the numerical error $\epsilon$ is too large, the error will always be dominated by $\epsilon$. Otherwise, when $\epsilon$ is small enough, the prescribed sparsity $s$ will dominate the error. This is consistent with Theorem~\ref{thm:accuracy}. Fig.~\ref{fig:synethic_err_c}~(b) confirms Theorem~\ref{thm:approxerr}: when $y(\vecpar)$ and the polynomial order $p$ are fixed (hence $\|y(\vecpar)-y_p(\vecpar)\|_2$ is fixed), the overall error is entirely dependent on the coefficient error.

\begin{figure}[t]
    \centering
      \includegraphics[width=2.1in]{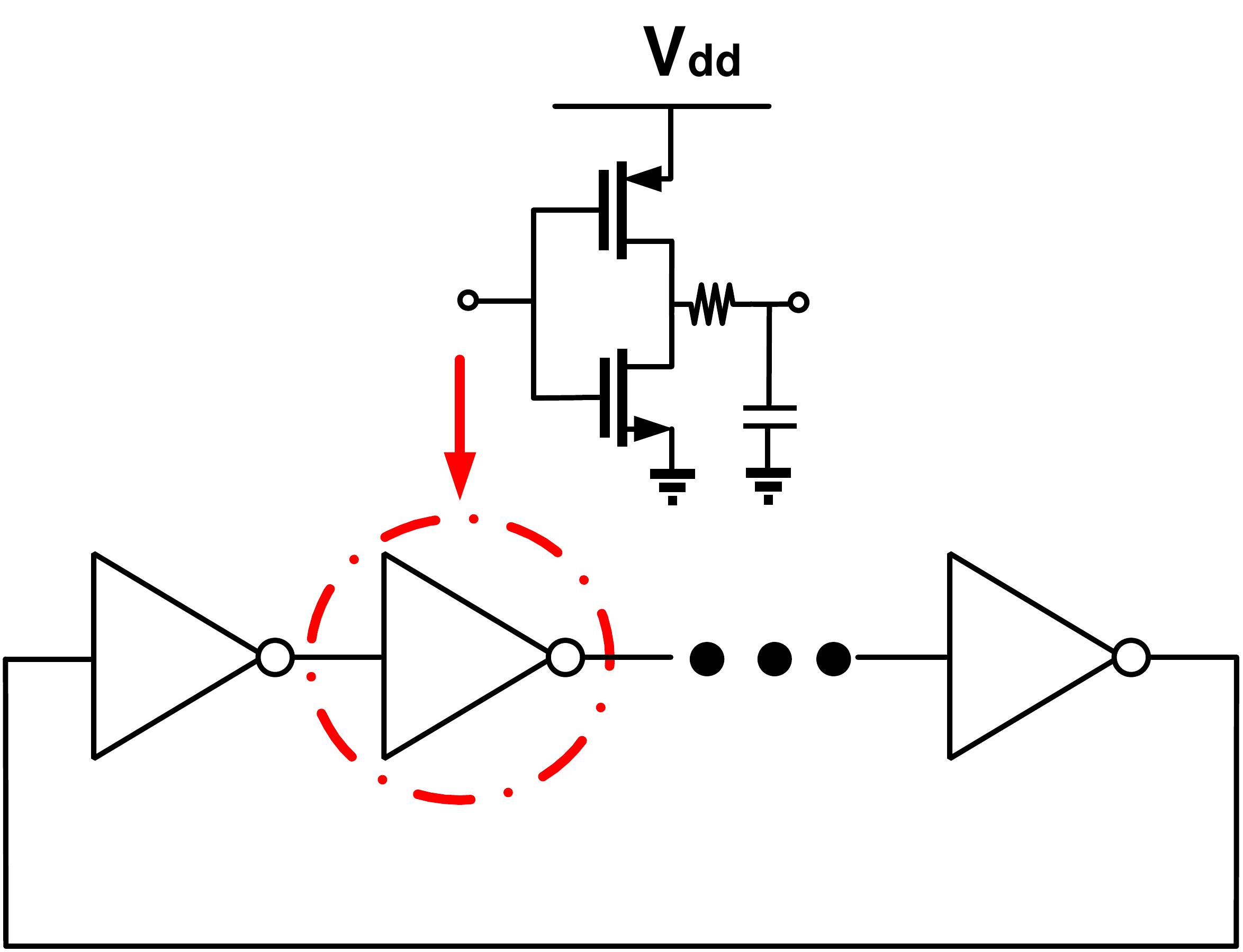}
\caption{Schematic of a CMOS ring oscillator. }
    \label{fig:ring}
\end{figure}

\textit{Remark.} Fig.~\ref{fig:synethic_err_c} shows that a large $s$ leads to smaller errors when $\epsilon$ is small enough. However, we cannot set the sparsity $s$ to be too large, because a larger $s$ requires more samples to achieve the RIP condition. Therefore, in the following experiments, we set $s$ as the largest integer below $\frac{m}{3}$.

\subsection{Photonic Band-pass Filter (19 Parameters)}

Now we consider the photonic band-pass filter in Fig.~\ref{fig:photonics}. This photonic IC has 9 micro-ring resonators, and it was originally designed to have a 3-dB bandwidth of 20 GHz, a 400-GHz free spectral range, and a 1.55-$\mu$m operation wavelength. A total of $19$ random parameters are used to describe the variations of the effective phase index ($n_{\text{neff}}$) of each ring, as well as the gap ($g$) between adjacent rings and between the first/last ring and the bus waveguides. These non-Gaussian correlated random parameters are described by a Gaussian  mixture  with three components. 

\begin{table}
\caption{Accuracy comparison on the photonic band-pass filter. The underscores indicate precision.}
\centering
\begin{tabular}{c|c|c|c|c}
\hline
method& Proposed &\multicolumn{3}{c}{Monte Carlo} \\ \hline
\# samples & 320 & $10^2$& $10^4$& $10^6$\\ \hline
mean (GHz)& 21.4773 & 2\underline{1}.5297& 21. \underline{4}867& 21.4\underline{7}82\\  \hline
{std (GHz)} &  0.3884 &0.4131 &   0.3767  &  0.3808  \\ \hline
\end{tabular}
\label{tab:pic}
\end{table}

We approximate the 3-dB bandwidth $f_{3 {\rm dB}}$ at the DROP port using our basis functions with the total order bounded by $p=3$. {It takes 350.59 seconds to generate the   1540 basis functions}.
We verify D-optimal, R-optimal, E-optimal methods, and their combinations (denoted as ``hybrid"). Fig.~\ref{fig:res_PIC19}~(b) clearly shows that all four adaptive sampling methods lead to significantly lower testing (i.e., prediction) errors because they choose more informative samples. Finally, we use 320 samples to assemble a linear system and solve it by an $\ell_0$ minimization, and obtain the sparse coefficients of our basis functions in Fig.~\ref{fig:res_PIC19}~(c). Although a third-order expansion involves more than $1000$ basis functions, only a few dozens are important.
Fig.~\ref{fig:res_PIC19}~(d) shows the predicted probability density function of the filter's 3-dB bandwidth, and it matches the result from Monte Carlo very well. More importantly, it is clear that our algorithm can capture accurately the multiple peaks in the output density function, and these peaks can be hardly predicted using existing stochastic spectral methods.

In order to demonstrate the effectiveness of our stochastic model, we compare the computed mean values and standard variations of $f_{3 {\rm dB}}$ from our methods with that of Monte Carlo in Table~\ref{tab:pic}. Our method provides a closed-form expression for the mean value. Monte Carlo method converges very slowly and requires $3125\times$ more simulation samples to achieve the similar level of accuracy (with 2 accurate fractional digits).

\subsection{CMOS Ring Oscillator (57 Parameters)}

We continue to consider the 7-stage CMOS ring oscillator in Fig.~\ref{fig:ring}. This circuit has $57$ random parameters describing the variations of threshold voltages, gate-oxide thickness, and effective gate length/width. We use a three-component Gaussian mixture model to describe the strong non-Gaussian correlations of threshold voltages, gate oxide thickness, gate lengths and widths.

We employ a 2nd-order expansion to model the oscillator frequency. {Generating the 1711 basis functions takes 1657 seconds.} The simulation samples are obtained by calling a periodic steady-state simulator repeatedly. The detailed results are shown in Fig.~\ref{fig:res_ring57}. Our adaptive sparse solver produces a sparse and highly accurate stochastic solution with better prediction behaviors than the standard compressed sensing does. The proposed basis functions can well capture the multiple peaks of the output probability density function caused by the strong non-Gaussian correlation.

Table~\ref{tab:ring} compares our method with Monte Carlo. Our method takes about $3333\times$ less samples than Monte Carlo to achieve a precision of one fractional digit for the mean value.

\begin{figure*}[t]
    \centering
    \includegraphics[width=4.6in]{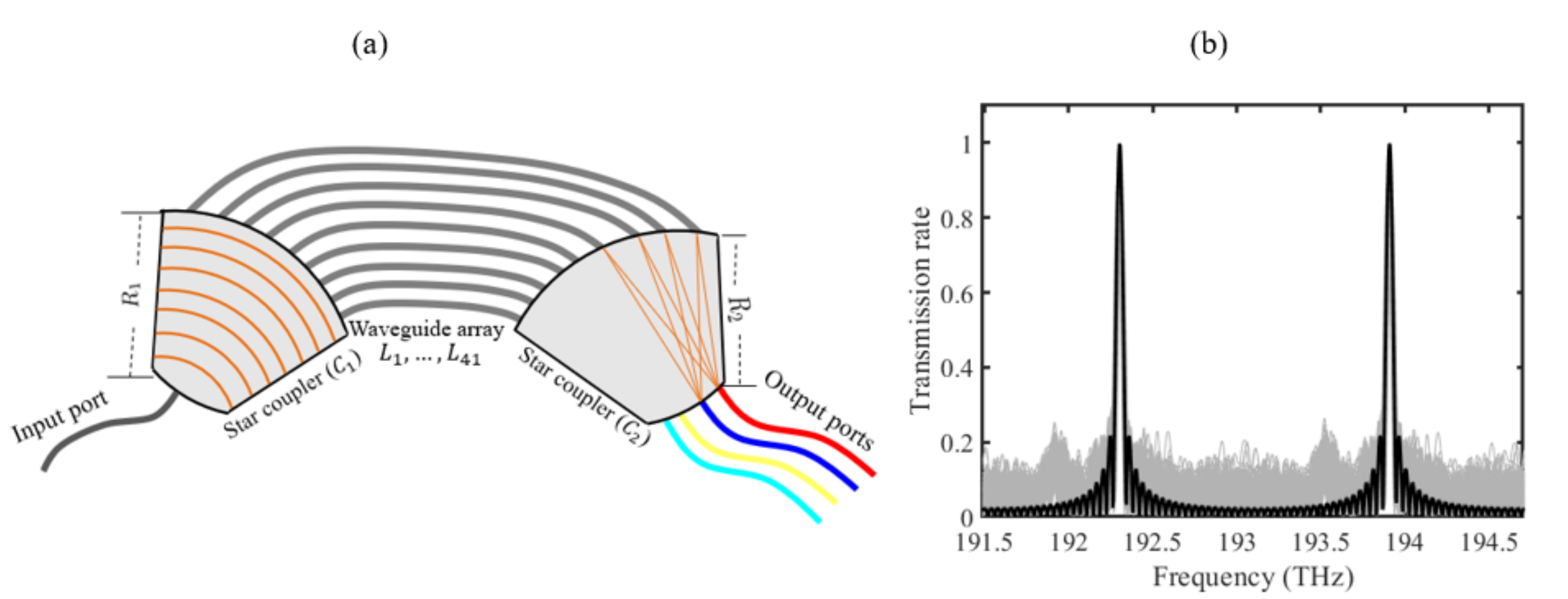}
\caption{(a) An AWG with 41 waveguide arrays; (b) The transmission rates from the input to output Port 1. The black curve shows the nominal result without any uncertainties, and the grey curves show the effects caused by the fabrication uncertainties of radius $R_1$, $R_2$ and waveguide lengths $L_1,\ldots,L_{41}$.}
    \label{fig:AWG}
\end{figure*}

\begin{figure*}[t]
     \centering
     \includegraphics[width=5.0in,height=3.1in]{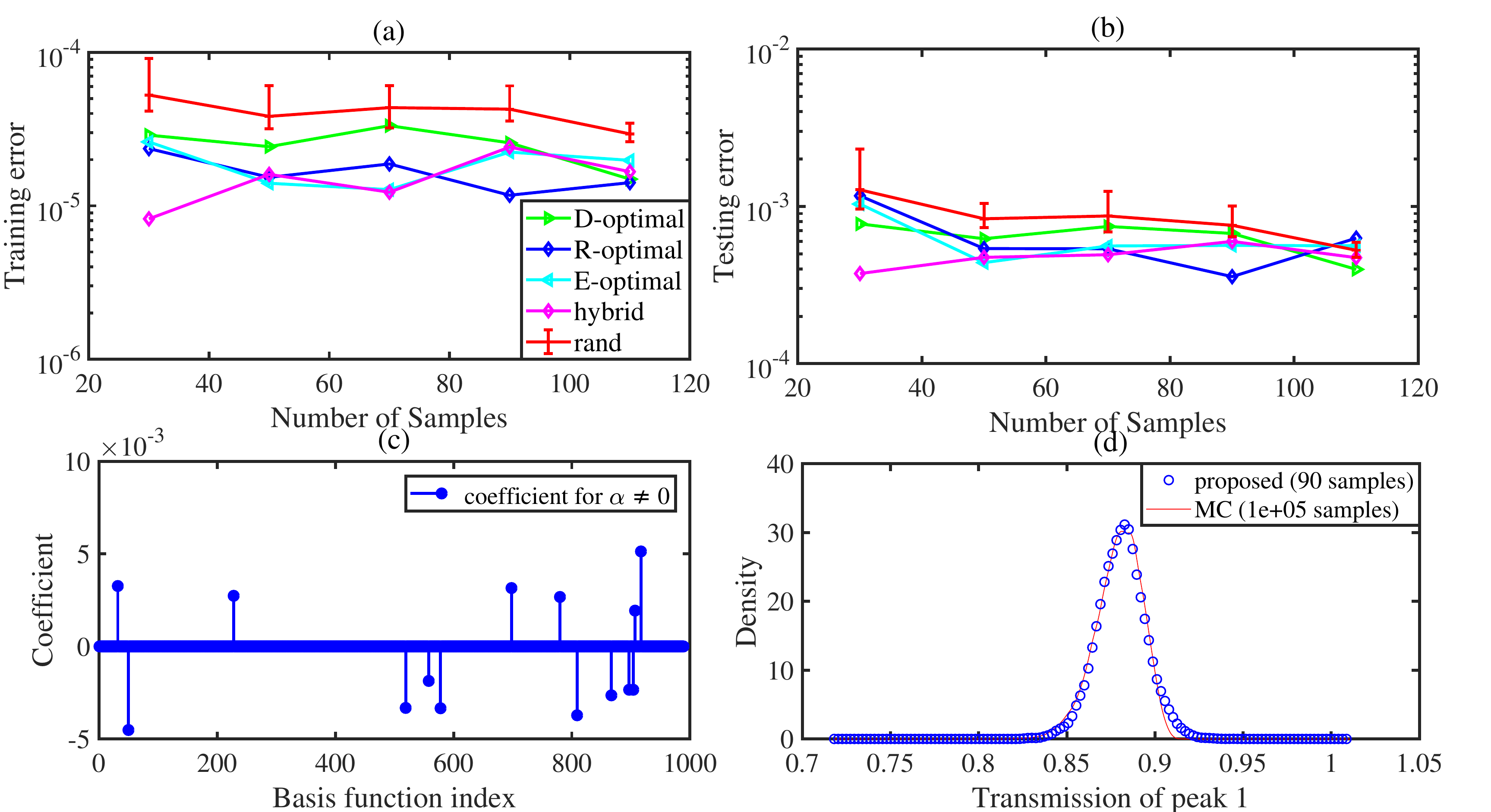}
     \caption{Numerical results of the AWG example. (a) Training error; (b) testing error on $10^{5}$ new samples; (c) computed coefficients/weights of our basis functions; (d) probability density function of transmission of peak one with our proposed method  and Monte Carlo, respectively.}
     \label{fig:AWG_result}
 \end{figure*}

\begin{table}[t]
\caption{Accuracy comparison on the CMOS ring oscillator. The underscores indicate precision.}
\centering
\begin{tabular}{c|c|c|c|c}
\hline
method& Proposed &\multicolumn{3}{c}{Monte Carlo} \\ \hline
\# samples & 300 & $10^2$& $10^4$& $10^6$\\ \hline
mean (MHz) &90.5797 & 89.7795& 9\underline{0}.4945& 90.\underline{5}253\\ \hline
{std (MHz)}& 14.6068 & 14.4512  & 14.6975 &  14.7400     \\
\hline
\end{tabular}
\label{tab:ring}
\end{table}

\subsection{Array Waveguide Grating (AWG, 43 Parameters)}

Finally, we investigate an arrayed waveguide grating (AWG)~\cite{zhang2018verilog}. The AWG is essential for wavelength division multiplexing in photonic systems. We use an AWG with 41 waveguide arrays and two-star couplers, as shown in Fig. \ref{fig:AWG}~(a). In the nominal design, the radius of each star coupler is $R_1=R_2= 2.985$ mm, and the waveguide lengths $L_1,\ldots,L_{41}$ range from $46$ $ \mu $m to $1.9$ $m$m. We use a Gaussian  mixture distribution with two components to describe the uncertainties in the waveguide lengths, and a Gamma distribution to formulate the uncertainties in each star coupler. The resulting transmission with uncertainties is shown in Fig.~\ref{fig:AWG}~(b).

We approximate the transmission rate of peak 1 at the output 1 by our proposed basis functions with a total order $p=2$. {It takes 227.39 seconds to generate 990 basis functions.} The numerical results are presented in Fig.~\ref{fig:AWG_result}. Similar to the previous examples, our adaptive sparse solver produces a sparse and highly accurate stochastic solution with better prediction accuracy than using random samples. Table~\ref{tab:AWG} compares our method with Monte Carlo. Our method consumes about $1111\times$ less sample points than Monte Carlo to get two exact fractional digits for the mean value.

\begin{table}
\caption{Accuracy comparison on the AWG by adaptive sampling. The underscores indicate precision.}
\centering
\begin{tabular}{c|c|c|c}
\hline
method& Proposed &\multicolumn{2}{c}{Monte Carlo} \\ \hline
\# samples & 90 & $10^3$& $10^5$\\ \hline
mean &0.8739&  0.\underline{8}802 &    0.8\underline{7}92    \\\hline
{std } &0.0120 &0.0134  &  0.0135   \\\hline

\end{tabular}
\label{tab:AWG}
\end{table}

\section{Conclusion}
\label{sec:conclusion}

This paper has presented a set of theoretical and numerical results for high-dimensional uncertainty quantification with non-Gaussian correlated process variations. We have proposed a set of basis functions for non-Gaussian correlated cases, and have provided a functional tensor-train method for their high-dimensional implementation. Theoretical results on the expressivity of our basis function are presented.
In order to reduce the computational time of analyzing process variations, we have justified the theoretical foundations (i.e., theoretical conditions and numerical errors) of compressed sensing in our problem setting. We have also proposed several adaptive sampling techniques to improve the performance of compressed sensing. Our approach has been verified with a synthetic example and three electronic and photonic ICs with up to $57$ random parameters. On these benchmarks, our method has achieved high accuracy in predicting the multi-peak output probability density functions and in estimating the output mean value. Our method has consumed  $1111\times$ to $3333\times$ less samples than Monte Carlo to achieve a similar level of accuracy.


\appendices
 
\section{Proof of Lemma \ref{lem:complete}}
\label{app:complete}

We show the completeness of our basis function in $\ten{S}_p$ via two steps.  Firstly, it follows from the definition of polynomials that the monomials \eqref{equ:monomials} are complete basis functions for  $\ten{S}_p$: any $y(\vecpar)\in\ten{S}_p$ can be written as  $y(\vecpar)=\mat{c}_0^T\mat{b}(\vecpar)$.
Secondly, our basis function is an equivalent linear transformation from the monomials $\mat{b}(\vecpar)$. Consequently,
\begin{equation*}
y(\vecpar)=\mat{c}_0^T\mat{b}(\vecpar)=\mat{c}_0^T\mat{L}\mat{\Phi}(\vecpar):=\mat{c}^T\mat{\Phi}(\vecpar).
\end{equation*}
This shows that our proposed basis function is complete (any function $y(\vecpar)\in\ten{S}_p$ can be expressed by a linear transformation of our proposed basis function).

\section{Proof of Lemma \ref{lem:convergence}} \label{app:convergence}
The statements (i) and (ii) hold if $\ten{S}_p$ is dense in $L^2(\vecpar,\rho(\vecpar))$.
According to Theorem 3 of \cite{petersen1983relation},  a sufficient condition would be the following: there exists $q>2$ such that the 1-D polynomials are dense in  $L^q(\xi_i,\rho_i(\xi_i)), \forall\, i=1,\ldots,d$. Here $\rho_i(\xi_i)$ is the  marginal distribution of $\xi_i$.

Consider the following two cases.
(i),   the marginal distribution  $\rho_i(\xi_i)$ is defined on a compact domain.  Then the 1-D polynomials are dense in  $L^q(\xi_i,\rho_i(\xi_i))$ under the Weierstrass Approximation Theorem \cite{perez2008survey}.
(ii),   $\rho_i(\xi_i)$ is  defined on a non-compact domain.
In this case, the 1-D polynomials are dense in  $L^q(\xi_i,\rho_i(\xi_i))$ under the condition that  the random variables are exponentially integrable  \cite{petersen1983relation}.  Namely, there exits a constant $a\ge0$ such that
\begin{equation}\label{equ:exponinteg}
\mathbb{E}[\exp(a|\xi_i|)]=\int_{\mathbb{R}}\exp(a|\xi_i|)\rho_i(\xi_i)d\xi_i <\infty, \forall\, i.
\end{equation}
In both two cases, the 1-D polynomials are dense in  $L^q(\xi_i,\rho_i(\xi_i)), \forall\, i=1,\ldots,d$,   hence the multidimensional polynomials are dense in $L^2(\vecpar, \rho(\vecpar))$ \cite{petersen1983relation}.

\textsl{Remark.} Inequality \reff{equ:exponinteg} holds  for many well-known distributions, such as   normal distribution,  Gaussian mixture distribution, and Gamma distribution.
 
\section{Proof of Theorem \ref{thm:MBound}}
\label{app:Mbound}

A sufficient condition to achieve the ($s,\kappa_s$)-RIP condition is {if} the following inequality holds
\begin{equation}\label{equ:sufRIP}
\|\frac1m \mat{\Phi}_s^T\mat{\Phi}_s - \mat{I}_s\|_F \le \kappa_s
\end{equation}
for any $\mat{\Phi}_s$  constructed by arbitrary $s$ columns of   $\mat{\Phi}$.
Equation \eqref{equ:sufRIP} can be derived if each element satisfies
\begin{equation}\label{equ:x}
|\frac1m \sum_kX_k^{ij}-\delta_{ij}|\le \frac{\kappa_s}{s}.
\end{equation}
Here $\delta_{ij}=1$ if $i=j$, and  $\delta_{ij}=0$ otherwise.
It follows from the concentration bounds of sub-Gaussian random variables \cite{buldygin1980sub} that for any $t\ge0$ there is
\begin{equation}
P\left(\left|\frac1m \sum_{k=1}^m(X_k^{ij}-\delta_{ij})\right|\ge \frac{t}{m}\right) \le 2\exp(-\frac{t^2}{2m\sigma^2}).
\end{equation}
 Substituting $t=m\frac{\kappa_s}{s}$ and $2\exp(-\frac{t^2}{2m\sigma^2})\le \eta$ into the above equation, we have that \eqref{equ:x} holds with a probability $\geq 1-\eta$ if $2\exp(-\frac{t^2}{2m\sigma^2})=2\exp(-\frac{m\kappa_s^2}{2s^2\sigma^2})\le \eta$ (i.e., $m\ge 2\log\left(\frac{2}{\eta}\right)\frac{s^2\sigma^2}{\kappa_s^2}$).

\section{Proof of Theorem \ref{thm:accuracy}}
\label{append:L0error}
Denote vector $\mat c$ as the exact unknown coefficients, vector $\mat c_s$ as the $s$-sparsity approximation of $\mat c$, and vector $\mat c^*$ as the solution from our $\ell_0$-minimization solver. The error of $\mat{c}^*$ satisfies
\begin{align*}
    \|\mat{c}-\mat{c}^*\|_2\le&\|\mat{c}-\mat{c}_s\|_2 + \|\mat{c}_s-\mat{c}^*\|_2\\
    \le&\|\mat{c}-\mat{c}_s\|_2 + \frac{1}{m(1-\kappa_{2s})}\|\mat{\Phi}\mat{c}_s-\mat{\Phi}\mat{c}^*\|_2,
\end{align*}
where
\begin{align*}
    \|\mat{\Phi}\mat{c}_s-\mat{\Phi}\mat{c}^*\|_2\le&\|\mat{\Phi}\mat{c}_s-\mat{\Phi}\mat{c}\|_2 + \|\mat{\Phi}\mat{c}-\mat{\Phi}\mat{c}^*\|_2\\
    \le&\|\mat{\Phi}\mat{c}_s-\mat{\Phi}\mat{c}\|_2 + \|\mat y-\mat{\Phi}\mat{c}^*\|_2 +\|\mat{e}\|_2\\
    \le &\|\mat{\Phi}\mat{c}_s-\mat{\Phi}\mat{c}\|_2 +{\epsilon} +\|\mat{e}\|_2.
\end{align*}
By Proposition 3.5 in \cite{needell2009cosamp}, it holds that
\begin{align*}
  \|\mat{\Phi}(\mat{c}_s-\mat{c})\|_2  \le& \sqrt{1+\kappa_{2s}}(\|\mat{c}_s-\mat{c}\|_2+\frac{1}{\sqrt{2s}}\|\mat{c}_s-\mat{c}\|_1)\\
  \le & \frac{1.7071 \sqrt{1+\kappa_{2s}}}{\sqrt{s}}\|\mat{c}_s-\mat{c}\|_1.
\end{align*}
Combing the above equations together, we have
\begin{align}
   \nonumber &\|\mat{c}-\mat{c}^*\|_2\\
 \nonumber   \le&\|\mat{c}-\mat{c}_s\|_1 +\left(\frac{1.7071 \sqrt{1+\kappa_{2s}}}{ m(1-\kappa_{2s}) \sqrt{s}}\|\mat{c}_s-\mat{c}\|_1 +\epsilon +\|\mat{e}\|_2\right)\\
   =& \alpha_0\|\mat{c}_s-\mat{c}\|_1+\alpha_1(\epsilon+\|\mat{e}\|_2),\label{equ:coefferr}
\end{align}
where $\alpha_0=1+\frac{1.7071 \sqrt{1+\kappa_{2s}}}{m(1-\kappa_{2s})\sqrt{s}}$, $\alpha_1=\frac{1}{m(1-\kappa_{2s})}$ are constants.

\section{Proof of Theorem \ref{thm:approxerr}}
\label{app:approxerr}
Denote $y(\vecpar)$ as the unknown quantity of interest,  $y_p(\vecpar)=\sum_{|\basisInd|=0}^pc_{\basisInd}\multiGPC_{\basisInd}(\vecpar)$ as the projection of $y(\vecpar)$ onto the $p$-th order polynomial space $\ten{S}_p$, and $y^*(\vecpar)=\sum_{|\basisInd|=0}^pc_{\basisInd}^*\multiGPC_{\basisInd}(\vecpar)$ as the model from our  numerical framework, then we have
 \begin{equation}
     \|y(\vecpar)-y^*(\vecpar)\|_2\le \|y(\vecpar)-y_p(\vecpar)\|_2+ \|y_p(\vecpar)-y^*(\vecpar)\|_2.
 \end{equation}
The first term is the distance of $y(\vecpar)$ to $\ten{S}_p$, which can be very small if $p$ is large enough. The second term is due to the error caused by a compressed sensing solver:
\begin{align}
  \nonumber  &\|y_p(\vecpar)-y^*(\vecpar)\|_2={\sqrt{\mathbb{E}[(y_p(\vecpar)-y^*(\vecpar))^2]}}\\
 \nonumber =&\sqrt{\mathbb{E}[(\sum_{|\alpha|=0}^p (c_{\basisInd}-c_{\basisInd}^*)\multiGPC_{\basisInd}(\vecpar))^2]}= \sqrt{\sum_{|\alpha|=0}^p (c_{\basisInd}-c_{\basisInd}^*)^2}\\
  \nonumber =&\|\mat{c}-\mat{c}^*\|_2,
\end{align}
where the third equality is due to the orthonormal property of our basis functions \ccf{and the last equality is derived from the definition of the $\ell_2$-norm in the Euclidean space}.

\bibliographystyle{IEEEtran}
\bibliography{RefList}

 \begin{IEEEbiography}
  [{\includegraphics[width=1in,height=1.25in,clip,keepaspectratio]{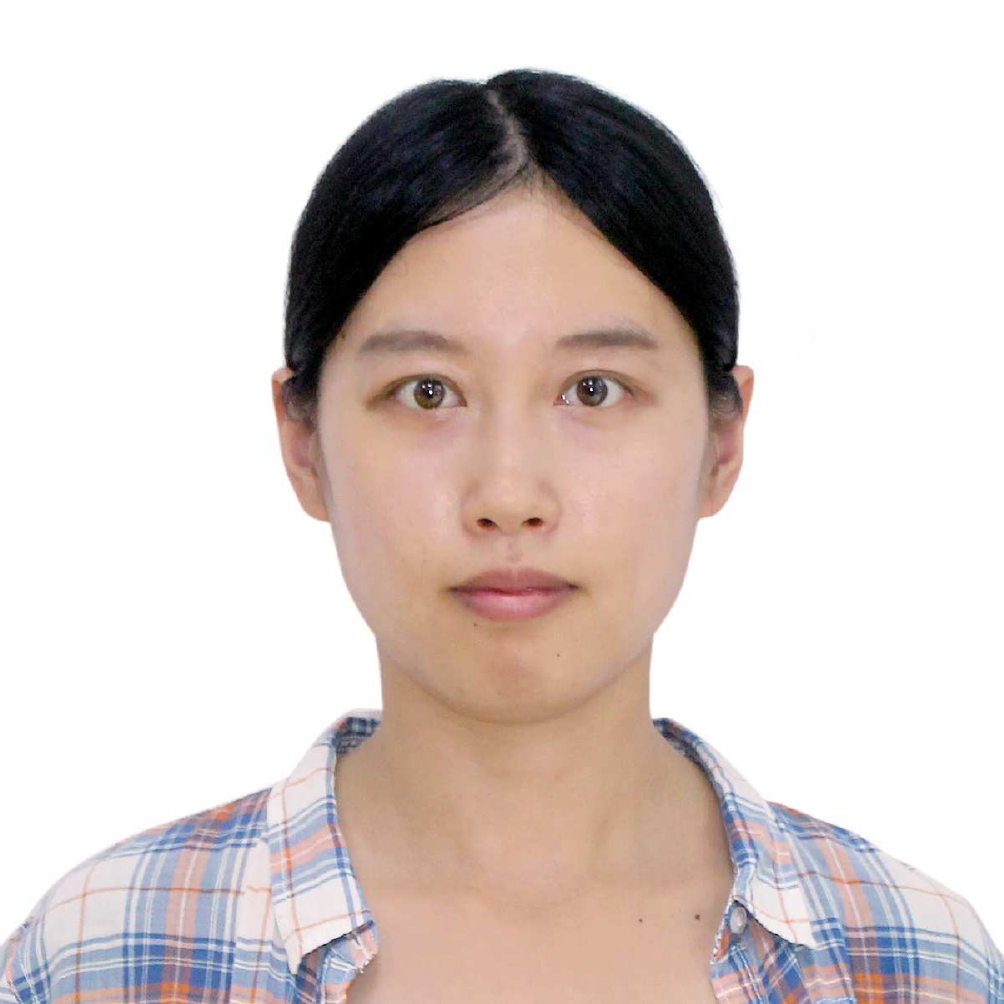}}]{Chunfeng Cui} received the Ph.D. degree in computational mathematics from Chinese Academy of Sciences, Beijing, China, in 2016 with a specialization in numerical optimization. From 2016 to 2017, she was a Postdoctoral Fellow at City University of Hong Kong, Hong Kong. In 2017, she joined the Department of Electrical and Computer Engineering at University of California Santa Barbara as a Postdoctoral Scholar.
From 2011 her research activity is mainly focused in the areas of tensor computations, uncertainty quantification, and machine learning.

She received the Best Paper Award of the IEEE EPEPS 2018.
\end{IEEEbiography}

   \begin{IEEEbiography}[{\includegraphics[width=1in,height=1.25in,clip,keepaspectratio]{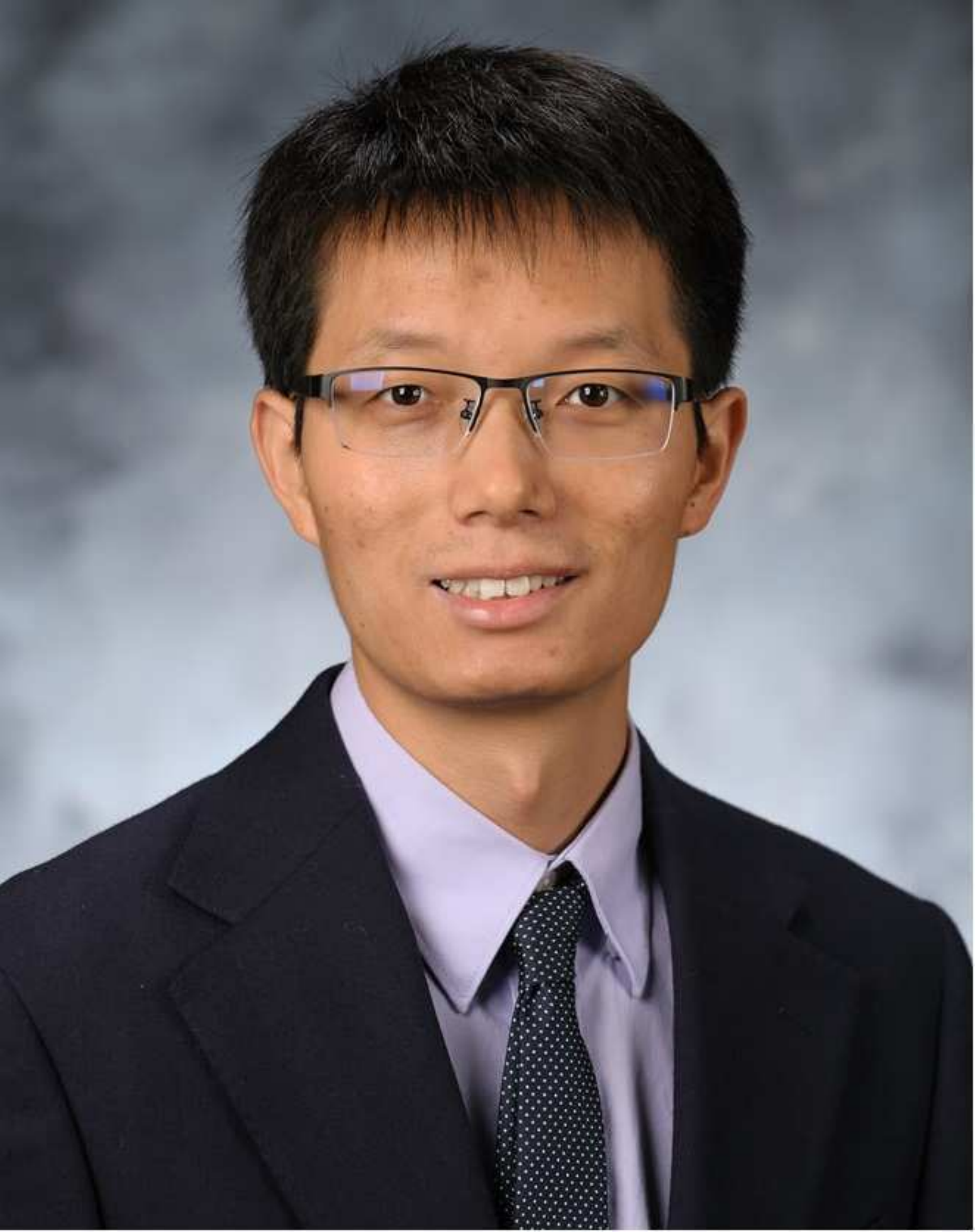}}]{Zheng Zhang} (M'15) received his Ph.D degree in Electrical Engineering and Computer Science from the Massachusetts Institute of Technology (MIT), Cambridge, MA, in 2015. He is an Assistant Professor of Electrical and Computer Engineering with the University of California at Santa Barbara, CA. His research interests include uncertainty quantification and tensor computation, with applications to multi-domain design automation, data analysis and algorithm/hardware co-design of machine learning. 

Dr. Zhang received the Best Paper Award of IEEE Transactions on Computer-Aided Design of Integrated Circuits and Systems in 2014, the Best Paper Award of IEEE Transactions on Components, Packaging and Manufacturing Technology in 2018, two Best Paper Awards (IEEE EPEPS 2018 and IEEE SPI 2016) and three additional Best Paper Nominations (CICC 2014, ICCAD 2011 and ASP-DAC 2011) at international conferences. His Ph.D. dissertation was recognized by the ACM SIGDA Outstanding Ph.D. Dissertation Award in Electronic Design Automation in 2016, and by the Doctoral Dissertation Seminar Award (i.e., Best Thesis Award) from the Microsystems Technology Laboratory of MIT in 2015. He was a recipient of the Li Ka-Shing Prize from the University of Hong Kong in 2011. He received the NSF CAREER Award in 2019.
\end{IEEEbiography}

\end{document}